\newcommand{\Rmnum}[1]{\expandafter\@slowromancap\romannumeral #1@}
\begin{document}

\newtheorem{theorem}{Theorem}
\newtheorem{observation}[theorem]{Observation}
\newtheorem{corollary}[theorem]{Corollary}
\newtheorem{algorithm}[theorem]{Algorithm}
\newtheorem{definition}{Definition}
\newtheorem{guess}{Conjecture}
\newtheorem{claim}[theorem]{Claim}
\newtheorem{problem}[theorem]{Problem}
\newtheorem{question}[theorem]{Question}
\newtheorem{lemma}[theorem]{Lemma}
\newtheorem{proposition}[theorem]{Proposition}
\newtheorem{fact}[theorem]{Fact}

\makeatletter
  \newcommand\figcaption{\def\@captype{figure}\caption}
  \newcommand\tabcaption{\def\@captype{table}\caption}
\makeatother

\newtheorem{acknowledgement}[theorem]{Acknowledgement}

\newtheorem{axiom}[theorem]{Axiom}
\newtheorem{case}[theorem]{Case}
\newtheorem{conclusion}[theorem]{Conclusion}
\newtheorem{condition}[theorem]{Condition}
\newtheorem{conjecture}{Conjecture}
\newtheorem{criterion}[theorem]{Criterion}
\newtheorem{example}[theorem]{Example}
\newtheorem{exercise}[theorem]{Exercise}
\newtheorem{notation}{Notation}
\newtheorem{solution}[theorem]{Solution}
\newtheorem{summary}[theorem]{Summary}

\newenvironment{proof}{\noindent {\bf
Proof.}}{\rule{3mm}{3mm}\par\medskip}
\newcommand{\remark}{\medskip\par\noindent {\bf Remark.~~}}
\newcommand{\pp}{{\it p.}}
\newcommand{\de}{\em}
\newcommand{\mad}{\rm mad}
\newcommand{\qf}{Q({\cal F},s)}
\newcommand{\qff}{Q({\cal F}',s)}
\newcommand{\qfff}{Q({\cal F}'',s)}
\newcommand{\f}{{\cal F}}
\newcommand{\ff}{{\cal F}'}
\newcommand{\fff}{{\cal F}''}
\newcommand{\fs}{{\cal F},s}
\newcommand{\s}{\mathcal{S}}
\newcommand{\G}{\Gamma}
\newcommand{\g}{(G_3, L_{f_3})}
\newcommand{\wrt}{with respect to }
\newcommand {\nk}{ Nim$_{\rm{k}} $  }
\newcommand {\dom}{ {\rm Dom}  }
 \newcommand {\ran}{ {\rm Ran}  }

\newcommand{\ch}{{\rm ch}}
\newcommand{\ee}{{\epsilon}}

\newcommand{\q}{\uppercase\expandafter{\romannumeral1}}
\newcommand{\qq}{\uppercase\expandafter{\romannumeral2}}
\newcommand{\qqq}{\uppercase\expandafter{\romannumeral3}}
\newcommand{\qqqq}{\uppercase\expandafter{\romannumeral4}}
\newcommand{\qqqqq}{\uppercase\expandafter{\romannumeral5}}
\newcommand{\qqqqqq}{\uppercase\expandafter{\romannumeral6}}

\newcommand{\qed}{\hfill\rule{0.5em}{0.809em}}

\newcommand{\var}{\vartriangle}

\title{{\large \bf A refinement of choosability  of graphs }}

\author{  Xuding Zhu\thanks{Department of Mathematics, Zhejiang Normal University,  China.  E-mail: xudingzhu@gmail.com. Grant Numbers: NSFC 11571319 and 111 project of Ministry of Education of China.}}

\maketitle

 {\tiny }

\begin{abstract}
Assume $k$ is a positive integer, $\lambda=\{k_1, k_2, \ldots, k_q\}$ is a partition of $k$ and $G$ is a graph. A $\lambda$-assignment of $G$ is a $k$-assignment $L$ of $G$ such that the colour set $\bigcup_{v\in V(G)}L(v)$ can be partitioned into $q$ subsets $C_1 \cup C_2 \ldots \cup C_q$ and for each vertex $v$ of $G$, $|L(v) \cap C_i| =  k_i$. We say $G$ is $\lambda$-choosable if for each $\lambda$-assignment $L$ of $G$, $G$ is $L$-colourable.
It follows from the definition that if $\lambda =\{k\}$, then $\lambda$-choosability  is the same as $k$-choosability, if $\lambda =\{1,1,\ldots, 1\}$, then $\lambda$-choosability  is equivalent to $k$-colourability. For the other partitions of $k$ sandwiched between 
$\{k\}$ and $\{1,1,\ldots, 1\}$ in terms of refinements, $\lambda$-choosability reveals a complex hierarchy of colourability of graphs.  
 We prove that for two partitions  $\lambda, \lambda'$ of $k$,  every $\lambda$-choosable graph is $\lambda'$-choosable if and only if $\lambda'$ is a refinement of $\lambda$. Then we study $\lambda$-choosability of special families of graphs. 
 The Four Colour Theorem 
 says that every planar graph is $\{1,1,1,1\}$-choosable.
 A very recent result of Kemnitz and Voigt implies that for any partition $\lambda$ of $4$ other than $\{1,1,1,1\}$, there is a planar graph which is not $\lambda$-choosable. 
  We observe that, in contrast to the fact that there are non-$4$-choosable  $3$-chromatic planar graphs,   every $3$-chromatic planar graph is $\{1,3\}$-choosable, and that if $G$ is a planar graph  whose dual $G^*$    has a connected spanning Eulerian subgraph,   then $G$ is $\{2,2\}$-choosable. We prove that if $n$ is a positive even integer,    $\lambda$ is a partition of $n-1$ in which each part is at most $3$, then $K_n$ is edge $\lambda$-choosable. 
  Finally we study relations between $\lambda$-choosability of graphs and colouring of signed graphs and generalized signed graphs. A conjecture of M\'{a}\v{c}ajov\'{a},   Raspaud and \v{S}koviera that every planar graph is signed 4-colcourable is recently disproved by Kardo\v{s}  and Narboni. We prove that every signed $4$-colourable graph is weakly $4$-choosable, and every signed $Z_4$-colourable graph is 
 $\{1,1,2\}$-choosable. The later result combined with the above result of Kemnitz and Voigt disproves a conjecture of Kang and Steffen that every planar graph is signed $Z_4$-colourable. We shall show   that 
 a graph constructed by Wegner in 1973 is also a counterexample to Kang and Steffen's conjecture, and present a new construction of a non-$\{1,3\}$-choosable planar graphs.

\noindent {\bf Keywords:} $\lambda$-assignment,  $\lambda$-choosable, signed graph, generalized signed graph,   planar graphs.

\end{abstract}


\section{Introduction}

A proper colouring of a graph $G$ is a mapping $f$ which assigns to each vertex $v$ a colour such that colours assigned to   adjacent vertices are distinct. A {\em $k$-colouring} of $G$ is a proper colouring $f$ of $G$ such that $f(v) \in \{1,2,\ldots, k\}$ for each vertex $v$. The {\em chromatic number} $\chi(G)$ of $G$ is the minimum integer $k$ such that $G$ is $k$-colourable.

An {\em   assignment} of a graph $G$ is a mapping $L$ which assigns to each vertex $v$ of $G$ a set
$L(v)$ of   permissible colours. A  {\em proper $L$-colouring} of $G$ is a proper colouring $f$ of $G$ such that for each vertex $v$ of $G$, $f(v) \in L(v)$. We say $G$ is {\em $L$-colourable} if $G$ has a proper  $L$-colouring. A {\em $k$-assignment} of $G$ is a assignment $L$ with $|L(v)|= k$ for each vertex $v$.  We say $G$ is {\em $k$-choosable} if $G$ is $L$-colourable for any $k$-assignment $L$ of $G$. The {\em choice  number} $ch(G)$ of $G$ is the minimum integer $k$ such that
$G$ is $k$-choosable.

The concept of list colouring was introduced by Erd\H{o}s, Rubin and Taylor \cite{ERT}, and independently by Vizing \cite{Vizing} in the 1970's, and provides a useful tool in many inductive proofs for upper bounds for the chromatic number of graphs. 
On the other hand, there is a big gap between $k$-colourability and $k$-choosability. In particular, bipartite graphs can have arbitrary large choice number.

 Intuitively, the reason that   a   $k$-colourable graph fails to be $L$-colourable for a $k$-assignment $L$    is due to the fact that lists assigned to vertices by $L$ may be complicately entangled. In this paper, we  put restrictions on the entanglements of lists that are allowed to be assigned to the vertices,
 and hence builds a refined scale for measuring choosability of graphs.

\begin{definition}
	\label{def-partition}
	By a partition of a positive integer $k$ we mean a finite multiset $\lambda =  \{k_1,k_2,\ldots, k_q\}$   of positive integers with $k_1+k_2+ \ldots + k_q = k$. 
	Each integer $k_i \in \lambda$ is called a {\em part} of $\lambda$.
\end{definition}

\begin{definition}
	\label{def-multiset}
	Assume $\lambda =\{k_1, k_2, \ldots, k_q\}$ is a partition of $k$ and $G$ is a graph. A {\em $\lambda$-assignment}  of $G$ is a  $k$-assignment $L$ of $G$ in which the colours in $\bigcup_{x \in V(G)}L(x)$ can be partitioned into sets $C_1, C_2, \ldots, C_q$ so that for each vertex $x$ and for each $1 \le i \le q$, $|L(x) \cap C_i| = k_i$. Each $C_i$ is called a {\em colour group} of $L$. We say $G$ is {\em $\lambda$-choosable}  if $G$ is $L$-colourable for any $\lambda$-assignment $L$ of $G$. 
\end{definition}

Equivalently, for a partition $\lambda = \{k_1,k_2,\ldots, k_q\}$ of $k$, a $k$-assignment $L$ of $G$ is a $\lambda$-assignment   of $G$ if for each $i=1,2,\ldots,q$, there is $k_i$-assignment $L_i$ of $G$ such that $L = \bigcup_{i=1}^q L_i$ (i.e., for each vertex $x$ of $G$, $L(x) = \bigcup_{i=1}^qL_i(x)$) and for $i \ne j$, for any vertices $x,y$ of $G$, $L_i(x) \cap L_j(y) = \emptyset$.

Assume $\lambda$ is a partition  of $k$. By {\em subdividing a part} of $\lambda$, we mean replacing a part $k_i \in \lambda$ with a few parts 
that form a partition of $k_i$. 
Assume $\lambda$ and $\lambda'$ are two partitions of $k$. We say $\lambda'$ is a {\em refinement} of $\lambda$ if $\lambda'$ is obtained from   $\lambda$ by subdividing some parts of $\lambda$. 
For example, $\lambda'=\{2,3,4\}$ is a refinement of $\lambda=\{4,5\}$. It follows from the definition that if $\lambda'$ is a  refinement  of $\lambda$, then every $\lambda'$-assignment of a graph $G$ is also a $\lambda$-assignment of $G$. Hence every $\lambda$-choosable graph is 
$\lambda'$-choosable.

\begin{definition}
	Assume $\lambda=\{k_1, k_2, \ldots, k_q\}$ is a partition of $k$ and $L$ is a $\lambda$-assignment of $G$ and  $C = \bigcup_{v \in V(G)}L(v)=C_1 \cup C_2 \cup \ldots \cup C_q$ is a   partition of the colour set into colour groups of $L$.  
	If for each $k_i=1$, the corresponding colour group $C_i$ is a singleton, then we say $L$ is a special $\lambda$-assignment.
\end{definition}

\begin{lemma}
	\label{lem0}
	Assume $\lambda=\{k_1, k_2, \ldots, k_q\}$ is a partition of $k$. A graph $G$ is $\lambda$-choosable if and only if for any special $\lambda$-assignment $L$ of $G$, $G$ is $L$-colourable.
\end{lemma}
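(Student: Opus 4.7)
The forward direction is immediate from the definitions: every special $\lambda$-assignment is a $\lambda$-assignment, so a $\lambda$-choosable graph is $L$-colourable for every special $\lambda$-assignment $L$.

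For the nontrivial direction, suppose $G$ is $L$-colourable for every special $\lambda$-assignment. Given an arbitrary $\lambda$-assignment $L$ with colour groups $C_1, C_2, \ldots, C_q$, my plan is to build a special $\lambda$-assignment $L'$ that \emph{dominates} $L$, in the sense that any proper $L'$-colouring can be converted into a proper $L$-colouring.

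For each index $i$ with $k_i = 1$, the condition $|L(v) \cap C_i| = 1$ means that every vertex $v$ already picks out a single colour $\phi_v(i)$ from $C_i$. I will pick a fresh symbol $c_i^*$ and collapse the entire group $C_i$ to the singleton $C_i' = \{c_i^*\}$, setting $L'(v) \cap C_i' = \{c_i^*\}$ for every $v$. For indices $i$ with $k_i \ge 2$, leave things unchanged: $C_i' = C_i$ and $L'(v) \cap C_i' = L(v) \cap C_i$. Since the $c_i^*$ are chosen fresh and distinct, the sets $C_i'$ are pairwise disjoint, so $L'$ really is a special $\lambda$-assignment. By hypothesis there is a proper $L'$-colouring $f$ of $G$.

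Now define $g(v) = \phi_v(i)$ whenever $f(v) = c_i^*$, and $g(v) = f(v)$ otherwise. Then $g(v) \in L(v)$ by construction. To verify properness, suppose $uv \in E(G)$ and $g(u) = g(v) = c$. If $c \in C_j$ with $k_j \ge 2$ then $f(u) = f(v) = c$, contradicting properness of $f$; if $c \in C_i$ with $k_i = 1$, then $g(u) = c$ forces $f(u) = c_i^*$ (since $c \notin L'(u)$ otherwise), and likewise $f(v) = c_i^*$, again contradicting $f$'s properness. Hence $g$ is a proper $L$-colouring of $G$.

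There is essentially no obstacle here — the proof is a direct colour-substitution argument; the only point that needs care is to verify that adjacent vertices cannot collide after substitution, and that is handled by the case split above.
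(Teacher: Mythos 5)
Your proof is correct and follows essentially the same route as the paper: collapse each colour group corresponding to a part $k_i=1$ to a single colour, apply the hypothesis to the resulting special $\lambda$-assignment, and substitute back vertex-by-vertex, with the same case split to check properness. The only (immaterial) difference is that you introduce fresh symbols $c_i^*$ where the paper reuses an arbitrary colour $c_i\in C_i$.
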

\begin{proof}
	If $G$ is $\lambda$-choosable, then of course for any special $\lambda$-assignment $L$, $G$ is $L$-colourable.
	
	Assume $G$ is $L$-colourable for any special $\lambda$-assignment $L$, and $L'$ is an arbitrary $\lambda$-assignment of $G$. Let $J = \{i: k_i = 1\}$. 
	Assume $C_1, C_2 , \ldots, C_q$ are the colour groups of $L$.
	For each $i \in J$, let $c_i$ be an arbitrary colour in $C_i$. Let $L(v) = (L'(v) - \bigcup_{i \in J}C_i) \cup \bigcup_{i\in  J}\{c_i\}$. Then $L$ is a special $\lambda$-list assgnment of $G$. By assumption, $G$ has a proper $L$-colouring $\phi$. For each vertex $v \in V(G)$ and each index $i \in J$, let $c_{v,i}$ be the unique colour in $L'(v) \cap C_i$. Let 
	\[
	\phi'(v) = \begin{cases} c_{v,i}, &\text{if $i \in J$ and $\phi(v)=c_i$},\cr
	\phi(v), &\text{otherwise}.
	\end{cases}
	\]
	Then $\phi'$ is a proper $L'$-colouring of $G$.
\end{proof}

It follows from the definition that for a  positive integer $k$ and a graph $G$, a $\{k\}$-assignment is the same as a $k$-assignment.
Hence $\{k\}$-choosable is the same as $k$-choosable. On the other hand, it follows from Lemma \ref{lem0}  that if $\lambda=\{1,1,\ldots,1\}$ is the multiset consisting of $k$ copies of $1$, then $\lambda$-choosable is the same as $k$-colourable. So $\lambda$-choosability puts $k$-colourability and $k$-choosability of graphs under the same framework, and $\lambda$-choosability for those partitions $\lambda$ of $k$   sandwiched between $\{k\}$ and $\{1,1,\ldots, 1\}$ (in terms of refinements) reveal a complicated hierarchy of colourability of graphs.

\begin{definition}
	\label{def-order}
	Assume $\lambda$ is a partition of $k$ and $\lambda'$ is a partition of $k' \ge k$. We write $\lambda \le \lambda'$ if $\lambda'$ is a refinement of a partition $\lambda''$ of $k'$ which is obtained from $\lambda$   by increasing some of   parts of $\lambda$.
\end{definition}
  
  For example, $\lambda=\{2,2\}$ is a partition of $4$, and $\lambda'=\{1,1,1,3\}$ is a partition of $6$. Let $\lambda''=\{3,3\}$.
  Then $\lambda''$ is obtained from $\lambda$ by increasing each part of $\lambda$ by $1$, and $\lambda'$ is a refinement of $\lambda''$. Hence $\lambda \le \lambda'$. 
  
  If $\lambda''$ is obtained from $\lambda$ by increasing some of parts of $\lambda$, then certainly every $\lambda$-choosable graph is $\lambda''$-choosable. If $\lambda'$ is a refinement of $\lambda''$, then every $\lambda''$-choosable graph is $\lambda'$-choosable. Therefore   if $\lambda \le \lambda'$, then every $\lambda$-choosable graph is $\lambda'$-choosable. 
 
In Section 2, we shall prove the converse of 
the above observation is also true:   If
 every $\lambda$-choosable graph is $\lambda'$-choosable, then $\lambda \le \lambda'$.
  
In Section 3, we study $\lambda$-choosability of planar graphs and line graphs. It is known that every planar graph is $5$-choosable \cite{Tho1994} and there are planar graphs that are not $4$-choosable \cite{Voigt}.  By the four colour theorem, every planar graph is $\{1,1,1,1\}$-choosable. A very recent result of Kemnitz and Voigt \cite{KV2018} shows that there are planar graphs that are not  $\{1,1,2\}$-choosable. This implies that for any partition $\lambda$ of $4$ different from $\{1,1,1,1\}$, there is a planar graph which is not $\lambda$-choosable. I.e., the Four Colour Theorem is tight in the refined scale of choosability defined in this paper. Nevertheless, many interesting problems concerning $\lambda$-choosability of subfamilies of planar graphs remains open.  
 Mirzakhani \cite{Mirzakhani} constructed a $3$-chromatic planar graph which is not $4$-choosable. In contrast to this result,  we observe that $3$-chromatic planar graphs are $\{1,3\}$-choosable.
We also show that if $G$ is a  planar graphs   whose dual $G^*$ contains a spanning Eulerian subgraph $H$ such that every  face of $H$ is either incident to a single connected component of $H$ or incident to   two connected components of $H$ that are joined by an even number of edges in $G^*$, then $G$ is $\{2,2\}$-choosable. In particular, if $G^*$ has a connected spanning Eulerian subgraph, then $G$ is $\{2,2\}$-choosable. 
It  remains an open problem as whether every $3$-chromatic planar graph is $\{2,2\}$-choosable. We also present in this section a new construction of a planar graph which is not $\{1,3\}$-choosable. Then we prove that if $n$ is an even integer, and $\lambda$ is a partition of $n-1$ in which each part is at most $3$, then $K_n$ is edge $\lambda$-choosable.
 
In Section 4, we discuss relation between $\lambda$-choosability and colouring of signed graphs and generalized signed graphs. 
A signed graph is a pair $(G, \sigma)$ such that $G$ is a graph and $\sigma: E \to \{-1,+1\}$ is a {\em signature} which assigns to each edge a sign. 
Colouring of signed graphs was first studied by Zalslavsky \cite{Z} in the 1980's and has attracted a lot of recent attention \cite{MRS,KS2, KS}. 
A set $I$ of integers is called {\em symmetric} if for any integer $i$, $i \in I$ implies that $-i \in I$. 
For a positive integer $k$, let $Z_k$ be the cyclic group of order $k$, and let $N_k$ be a symmetric set  of $k$ integers, say $N_k=\{1, -1, 2, -2, \ldots, q, -q\}$ if $k=2q$ is even and $N_k = \{0, 1, -1, 2, -2, \ldots, q, -q\}$ if $k=2q+1$ is odd. A  {\em   $k$-colouring} of $(G, \sigma)$ is a mapping $f: V(G) \to N_k$ such that for each edge $e=xy$, $f(x) \ne \sigma(e)f(y)$, and 
a {\em   $Z_k$-colouring} of $(G, \sigma)$ is a mapping $f: V(G) \to Z_k$ such that for each edge $e=xy$, $f(x) \ne \sigma(e)f(y)$. 
We say a graph $G$ is {\em signed $k$-colourable} (respectively, {\em signed  $Z_k$-colourable}) if for any signature $\sigma$ of $G$, the signed graph $(G,\sigma)$ is $k$-colourable (respectively,  $Z_k$-colourable).  It was conjectured by M\'{a}\v{c}ajov\'{a},   Raspaud and \v{S}koviera \cite{MRS} that every planar graph is signed $4$-colourable, and conjectured by Kang and Steffen \cite{St} that every planar graph is signed $Z_4$-colourable. 
An assignment $L$ of a graph $G$ is called {\em symmetric} if for each vertex $v$ of $G$, $L(v)$ is a symmetric set of integers. We say $G$ is {\em weakly $k$-choosable} if $G$ is $L$-colourable for any symmetric $k$-assignment $L$. K\"{u}ndgen and Ramamurthi \cite{KR2002} conjectured that every planar graph is weakly $4$-choosable. It follows from the definition that every $\{2,2\}$-choosable  graph is 
  weakly $4$-choosable. We prove that every signed $4$-colourable graph is also weakly $4$-choosable, and that every signed $Z_4$-colourable planar graph is $\{1,1,2\}$-choosable. So M\'{a}\v{c}ajov\'{a},   Raspaud and \v{S}koviera's conjecture implies  K\"{u}ndgen and Ramamurthi's conjecture. Very recently, Kardo\v{s} and Narboni \cite{KN} disproved the conjecture of M\'{a}\v{c}ajov\'{a},   Raspaud and \v{S}koviera. The conjecture of 
  K\"{u}ndgen and Ramamurthi's remains open.
  On the other hand,  our result combined with the above mentioned result of Kemnitz and Voigt \cite{KV2018} disproves the conjecture of Kang and Steffen. We shall also present a direct construction of a signed planar graph which is not $Z_4$-colourable. This example is obtained by assigning signs to edges of  a   planar graph constructed by Wegner in 1973 \cite{Wegner}. 
 
Section 5 studies colouring of generalized signed graphs.
DP-colouring of graphs is a concept introduced by Dvo\v{r}\'{a}k and Postle \cite{DP} as a variation of list colouring of graphs. 
In the same spirit of generalizing choosabilities to $\lambda$-choosabilities, DP-colouring is generalized to colouring of generalized signed graphs. Relation between colouring of such generalized signed graphs and $\lambda$-choosabilities is discussed in this section.

In Section 6, we collect some open problems concerning $\lambda$-choosability of graphs. 
 
\section{Ordering of partitions of integers}
\label{sec-order}

We have defined a  relation $\le$ on the set of partitions of integers, which is easily seen to be   reflexive, anti-symmetric  and transitive. I.e., $\le$ is a partial ordering of the partitions of positive integers. 
This section proves the following result.

\begin{theorem}
	\label{main1}
	If $\lambda \le \lambda'$, then every $\lambda$-choosable graph is $\lambda'$-choosable, and conversely, if every $\lambda$-choosable graph is $\lambda'$-choosable, then $\lambda \le \lambda'$. 	
\end{theorem}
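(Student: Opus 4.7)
The forward direction is essentially verified in the discussion preceding the theorem: if $\lambda'$ refines an intermediate partition $\lambda''$ obtained from $\lambda$ by enlarging parts, every $\lambda'$-assignment of $G$ can be viewed as a $\lambda''$-assignment by merging the groups that correspond to each refined part, and further dominates a $\lambda$-subassignment (discard the extra colours in each enlarged group). So a proper colouring of the $\lambda$-sublists is automatically a proper colouring of the original $\lambda'$-lists. My plan therefore focuses on the converse.

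I would argue the contrapositive: assuming $\lambda\not\le\lambda'$, construct a graph $G$ that is $\lambda$-choosable but not $\lambda'$-choosable. First I reformulate the order combinatorially: writing $\lambda=\{k_1,\ldots,k_q\}$ and $\lambda'=\{m_1,\ldots,m_p\}$, the condition $\lambda\le\lambda'$ is equivalent to being able to partition the multiset $\{m_1,\ldots,m_p\}$ into $q$ nonempty submultisets $S_1,\ldots,S_q$ with $\sum_{m\in S_i} m\ge k_i$ for every $i$. The failure of this Hall-type condition is the combinatorial input I must convert into a list-colouring obstruction. One subcase is immediate: if $k=\sum k_i>\sum m_j=k'$, then $G=K_k$ is a witness---it is $k$-choosable by the standard SDR/Hall argument on the lists, hence $\lambda$-choosable, while $\chi(K_k)=k>k'$ blocks $\lambda'$-choosability through any uniform $\lambda'$-assignment (whose proper colouring would be a $k'$-colouring of $K_k$).

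For the remaining (and typical) case $k\le k'$, my candidate is the complete multipartite graph $G=K_{N,N,\ldots,N}$ with $q$ parts of large size $N$. Its $\lambda$-choosability is immediate via the ``one colour group per part'' strategy: given any $\lambda$-assignment with groups $C_1,\ldots,C_q$, colour the $i$-th part of $G$ entirely from $C_i$, which gives each vertex $k_i\ge 1$ admissible colours and no cross-part conflict because distinct colour groups are disjoint. For the non-$\lambda'$-choosability I plan to design a specific $\lambda'$-assignment $L'$ using Erd\H{o}s--Rubin--Taylor style lists on each colour group $C_j$---roughly, all $m_j$-subsets of a $(2m_j-1)$-element universe, distributed over $V(G)$ so that every part of $G$ carries every pattern. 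Any proper $L'$-colouring then yields a partition $V(G)=V_1'\cup\cdots\cup V_p'$ in which each $G[V_j']$ is colourable from its induced $m_j$-assignment $L'(\cdot)\cap C_j$, and the ERT design should force the support $S_j=\{i:V_j'\cap V_i\ne\emptyset\}$ to satisfy $\sum_{j:\,i\in S_j} m_j\ge k_i$ for every $i$---exactly the condition $\lambda\le\lambda'$, which is excluded by hypothesis.

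The main obstacle I anticipate is the precise calibration of the ERT gadgets on each $C_j$: the lists on every sufficiently large bipartite sub-pattern across two parts of $G$ must genuinely realise the $K_{N,N}$-level ERT obstruction for the relevant $m_j$, while still leaving enough flexibility on each individual part of $G$ to preserve $\lambda$-choosability. Once this is arranged---with $N$ chosen polynomially large in $\max_j\binom{2m_j-1}{m_j}$---the Hall-type failure of $\lambda\le\lambda'$ translates into a counting contradiction on the admissible supports $(S_1,\ldots,S_p)$, completing the proof.
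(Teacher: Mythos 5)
Your forward direction is fine and matches the paper, and much of your machinery for the converse is also the paper's: ERT-style lists built from groups $C'_j$ of size $2m_j-1$, indexed by all tuples of $m_j$-subsets, together with the observation that each group can be ``occupied'' by at most one block of the join (two blocks each using $m_j$ colours of a $(2m_j-1)$-set would share a colour across a complete join). But there is a fatal flaw in your choice of test graph. The complete multipartite graph $K_{N,\ldots,N}$ with $q$ independent parts is $\mu$-choosable for \emph{every} partition $\mu$ with at least $q$ parts: given a $\mu$-assignment with groups $D_1,\ldots,D_p$ ($p\ge q$), colour all of part $i$ from $D_i$; each vertex has at least one admissible colour, there are no edges inside a part, and distinct groups are disjoint, so no conflict occurs. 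Hence your graph can only detect the failure of the crude condition $p\ge q$, never the quantitative condition $\sum_{m\in S_i}m\ge k_i$. A concrete counterexample to your plan: take $\lambda=\{1,3\}$ and $\lambda'=\{2,2\}$. Then $\lambda\not\le\lambda'$ (no way to split $\{2,2\}$ into two submultisets with sums $\ge 1$ and $\ge 3$), yet $K_{N,N}$ is $\{2,2\}$-choosable by the one-group-per-part strategy, so it is not a witness. Your final step --- ``the ERT design should force $\sum_{j:\,i\in S_j}m_j\ge k_i$'' --- is exactly the step that cannot be carried out: an independent part can be coloured with a single colour, so nothing forces part $i$ to consume $k_i$ colours.

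The paper's fix is to replace each independent part by $G_i$, the disjoint union of $n$ copies of $K_{k_i}$, and take $G=\vee_{i=1}^q G_i$ (with $n$ equal to the number of list patterns). This $G$ is still $\lambda$-choosable, since each $G_i$ is $k_i$-choosable and the groups are disjoint (Lemma \ref{lem1}). The cliques supply precisely the missing forcing: for the copy of $K_{k_i}$ whose list pattern selects, in every non-occupied group $C'_j$, a set of $m_j$ colours unused by $G_i$, all $k_i$ of its vertices must receive distinct colours drawn from $\bigcup_{j\in J_i}S_{l,j}$, giving $\sum_{j\in J_i}m_j\ge k_i$ with the $J_i$ pairwise disjoint --- which is your Hall-type reformulation of $\lambda\le\lambda'$. (With this graph your separate treatment of the case $k>k'$ via $K_k$ also becomes unnecessary, since the inequality $\sum_{j\in J_i}m_j\ge k_i$ summed over $i$ already forces $k'\ge k$.)
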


Before proving this theorem, we prove a lemma, which will be used in our proof, and which is of independent interest.

For graphs $G_1, G_2, \ldots, G_q$, the {\em join}  $ \vee_{i=1}^q G_i$  of    $G_1, G_2, \ldots, G_q$   is obtained from the disjoint union of the $G_i$'s   by adding edges connecting every vertex of $G_i$ to every vertex of $G_j$ for any $i \ne j$.

\begin{lemma}
	\label{lem1}
	Assume for $i=1,2,\ldots, q$, $\lambda_i$ is a partition of $k_i$, and
	$G_i$ is $\lambda_i$-choosable. Let $\lambda = \bigcup_{i=1}^q\lambda_i$ be the union of $\lambda_i$. Then $\vee_{i=1}^qG_i$ is $\lambda$-choosable.  In particular, if $G_i$ is $k_i$-choosable, and $\lambda=\{k_1, k_2, \ldots, k_q\}$, then $\vee_{i=1}^qG_i$ is $\lambda$-choosable.
\end{lemma}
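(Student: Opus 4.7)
The plan is to exploit the join structure directly: any two vertices in different $G_i$'s are adjacent, so any proper $L$-colouring must use disjoint sets of colours on different $G_i$'s. Since the colour groups of $L$ are already pairwise disjoint by definition of a $\lambda$-assignment, it suffices to distribute the colour groups among the $G_i$'s in a way compatible with each $\lambda_i$, and then colour each $G_i$ independently using its allotted colour groups.

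More concretely, let $L$ be an arbitrary $\lambda$-assignment of $H := \vee_{i=1}^q G_i$ and let $C_1, C_2, \ldots, C_m$ be its colour groups, where $m = |\lambda|$ and the $j$-th group corresponds to a part $k_j' \in \lambda$. Since $\lambda = \bigcup_{i=1}^q \lambda_i$ as multisets, I would fix a partition $\{1, 2, \ldots, m\} = \bigsqcup_{i=1}^q J_i$ such that, for each $i$, the multiset $\{k_j' : j \in J_i\}$ equals $\lambda_i$. For each $i$ and each vertex $v$ of $G_i$, define
\[
L_i(v) \;=\; \bigcup_{j \in J_i} \bigl(L(v) \cap C_j\bigr).
\]
By construction, $|L_i(v) \cap C_j| = k_j'$ for every $j \in J_i$, so $L_i$ is a $\lambda_i$-assignment of $G_i$. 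By the hypothesis that $G_i$ is $\lambda_i$-choosable, pick a proper $L_i$-colouring $\phi_i$ of $G_i$.

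Finally, define $\phi \colon V(H) \to \bigcup_j C_j$ by $\phi(v) = \phi_i(v)$ whenever $v \in V(G_i)$. Edges inside a single $G_i$ are handled by properness of $\phi_i$. For an edge joining $v \in V(G_i)$ to $u \in V(G_j)$ with $i \ne j$, the colour $\phi(v)$ lies in $\bigcup_{l \in J_i} C_l$ while $\phi(u)$ lies in $\bigcup_{l \in J_j} C_l$; since $J_i \cap J_j = \emptyset$ and the $C_l$ are pairwise disjoint, $\phi(v) \ne \phi(u)$. Hence $\phi$ is a proper $L$-colouring of $H$. The particular case in the last sentence is immediate because a $k_i$-choosable graph is $\{k_i\}$-choosable.

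There is no real obstacle here: the only thing one must notice is that the multiset identity $\lambda = \bigcup_i \lambda_i$ lets us re-label the colour groups of an arbitrary $\lambda$-assignment to match the $\lambda_i$'s, and the join ensures disjointness of colours across parts for free. The proof is essentially a bookkeeping argument, and no property of the individual $G_i$ beyond $\lambda_i$-choosability is used.
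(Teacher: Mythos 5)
Your proof is correct and follows essentially the same route as the paper's: decompose the $\lambda$-assignment $L$ into $\lambda_i$-assignments $L_i$ by grouping the colour groups according to the multiset identity $\lambda=\bigcup_i\lambda_i$, colour each $G_i$ by $\lambda_i$-choosability, and observe that the join edges are automatically proper since the colour groups allotted to different $G_i$'s are disjoint. Your write-up simply makes explicit the bookkeeping that the paper's two-line proof leaves implicit.
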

\begin{proof}
	Let $L$ be a $\lambda$-assignment of $G$. Then $L(v)$ is the disjoint union of $L_1(v) \cup L_2(v) \cup \ldots \cup L_q(v)$, such that $L_i$ is a $\lambda_i$-assignment of $G$ and $L_i(v) \cap L_j(v') = \emptyset$ for any $i \ne j$. Let $f_i$ be an $L_i$-colouring of $G_i$, then the union of the $f_i$'s is an $L$-colouring of $G$. 
\end{proof}

\bigskip
\noindent
{\bf Proof of Theorem \ref{main1}}

	One direction of this theorem follows easily from the definitions.
	Assume $\lambda$ is a partition of $k$, $\lambda'$ is a partition of $k'$ and $\lambda \le \lambda'$ and $G$ is $\lambda$-choosable.
	It follows from the definition that there is a partition $\lambda''$ of $k'$ which is obtained from $\lambda$ by increasing some parts of $\lambda$, and $\lambda'$ is a refinement of $\lambda''$. 
	Let $L$ be  a $\lambda'$-assignment of $G$. Then $L$ is also a $\lambda''$-assignment of $G$. By omitting some colours from $L(v)$ for each vertex $v$ of $G$ (if needed), we obtain a $\lambda$-assignment $L'$ of $G$. Since  $G$ is $\lambda$-choosable,   $G$ has an $L'$-colouring, which is also an $L$-colouring of $G$. Hence $G$ is $\lambda'$-choosable.

 Assume that every $\lambda$-choosable graph is $\lambda'$-choosable. We shall prove that $\lambda \le \lambda'$. 
	
	Assume $\lambda=(k_1,k_2, \ldots, k_q)$ and $\lambda'=(k'_1,k'_2, \ldots, k'_p)$. 
		We construct a graph $G$ as follows: Let $n$ be a sufficiently large integer (to be determined later). For $1\le i \le q$, let $G_i$ be the disjoint union of $n$ copies of the complete graph  $K_{k_i}$ on $k_i$-vertices. Let $G= \vee_{i=1}^q G_i$. 
	
	Since each $G_i$ is $k_i$-choosable, it follows from Lemma \ref{lem1} that   $G$ is  $\lambda$-choosable. 
	
	By our assumption, $G$ is  $\lambda'$-choosable. 
	
	Assume $C'_1, C'_2, \ldots, C'_p$ are disjoint colour sets such that each $|C'_j| = 2k'_j-1$.
	
	Let
	$${\cal S} = {C'_1 \choose k'_1} \times {C'_2 \choose k'_2} \times \ldots \times {C'_p \choose k'_p}.$$
	
	Here ${C'_j \choose k'_j}$ is the family of all $k'_j$-subsets of $C'_j$.
	
	Let $n=|{\cal S}|$.  Assume ${\cal S} =\{S_1, S_2, \ldots, S_n\}$ and   $S_j = (S_{j,1},S_{j,2},\ldots, S_{j,p}) $, where $S_{j,i} \in {C'_i \choose k'_i}$.

	Note that $G_i$ contains $n$ copies of $K_{k_i}$, which are labeled as the 1st copy, the 2nd copy, etc. of $K_{k_i}$. 
	 
	For each vertex $v$ of the $j$th copy of $K_{k_i}$ in $G_i$,
	let $L(v) = \bigcup_{i=1}^qS_{j,i}$. Then $L$ is a $\lambda'$-assignment  of $G$.
	
	By our assumption, there is an   $L$-colouring $\phi$ of $G$. For each index $j \in \{1,2,\ldots, p\}$, we say $C'_j$ is {\em occupied   by   $G_i$}  if at least $k'_j$ colours in $C'_j$ are used by vertices in $G_i$. 
	
	For each $i \in \{1,2,\ldots, q\}$, let $$J_i = \{j: \text{ $C'_j$ is occupied by  $G_i$}\}.$$

	\begin{claim}
		\label{cl1}
		For $i, i' \in \{1,2,\ldots, q\}$ and $i \ne i'$, we have $J_i \cap J_{i'} = \emptyset$.
	\end{claim}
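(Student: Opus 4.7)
The plan is to argue by contradiction using the pigeonhole principle on the colour set $C'_j$, whose size $2k'_j-1$ is deliberately chosen to be just shy of $2k'_j$.

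Suppose some index $j$ lies in $J_i \cap J_{i'}$ for distinct $i, i'$. By the definition of ``occupied'', the proper $L$-colouring $\phi$ uses at least $k'_j$ colours of $C'_j$ on the vertices of $G_i$ and at least $k'_j$ colours of $C'_j$ on the vertices of $G_{i'}$. Let $A \subseteq C'_j$ be the colours of $C'_j$ appearing on $G_i$ and $A' \subseteq C'_j$ the colours of $C'_j$ appearing on $G_{i'}$. Then $|A|+|A'| \ge 2k'_j > 2k'_j - 1 = |C'_j|$, so $A \cap A' \neq \emptyset$.

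Pick any $c \in A \cap A'$. Then there is a vertex $v \in V(G_i)$ with $\phi(v) = c$ and a vertex $v' \in V(G_{i'})$ with $\phi(v') = c$. Because $G = \vee_{i=1}^q G_i$ is a join and $i \neq i'$, the vertices $v$ and $v'$ are adjacent in $G$, so they must receive different colours under the proper colouring $\phi$. This contradicts $\phi(v) = \phi(v') = c$, and therefore $J_i \cap J_{i'}$ must be empty.

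There is no real obstacle here; the only thing to notice is that the whole construction was rigged precisely so that $|C'_j|=2k'_j-1$, which is the smallest value that still forces the pigeonhole collision above whenever two different $G_i$'s each use $k'_j$ colours from $C'_j$. The join structure then does the rest of the work, converting the shared colour into a monochromatic edge.
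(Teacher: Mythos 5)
Your argument is correct and is essentially identical to the paper's: both use the pigeonhole principle on $|C'_j| = 2k'_j-1$ to force a common colour on $G_i$ and $G_{i'}$, then derive a contradiction from the join structure making those two vertices adjacent. Nothing further to add.
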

	\begin{proof}
		If $J_i \cap J_{i'} \ne \emptyset$, say $j \in J_i \cap J_{i'}$, then at least $k'_j$ colours are used by vertices in $G_i$ and at least $k'_j$ colours are used by vertices in $G_{i'}$. As $|C'_j| = 2k'_j-1$, there is a colour $c \in C'_j$ that are used by both vertices of $G_i$ and $G_{i'}$, but every vertex of $G_i$ is adjacent to every vertex of $G_{i'}$, a contradiction.	
	\end{proof}

\begin{claim}
	\label{cl2}
	For each index $i \in \{1,2,\ldots, q\}$,  $\sum_{j \in J_i}k'_j \ge k_i$. 
\end{claim}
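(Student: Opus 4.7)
The plan is to prove Claim \ref{cl2} by contradiction: assume for some fixed $i$ that $\sum_{j \in J_i} k'_j < k_i$, and exhibit a copy of $K_{k_i}$ inside $G_i$ whose $k_i$ vertices cannot receive $k_i$ distinct colours under $\phi$. The main leverage is that $G_i$ contains one copy of $K_{k_i}$ for every tuple in $\mathcal S$, so we can single out a copy whose list has been ``engineered'' to expose the deficit.

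For each index $j \notin J_i$, let $U_j$ denote the set of colours of $C'_j$ that actually appear on some vertex of $G_i$ under $\phi$. By the very definition of $J_i$, $|U_j| \le k'_j - 1$. Since $|C'_j| = 2k'_j - 1$, the complement $C'_j \setminus U_j$ has size at least $k'_j$, so we can pick a $k'_j$-subset $T_j \subseteq C'_j \setminus U_j$. For indices $j \in J_i$, choose $T_j \in \binom{C'_j}{k'_j}$ arbitrarily. The tuple $(T_1, T_2, \ldots, T_p)$ lies in $\mathcal S$, so there is an index $j_0$ with $S_{j_0,j} = T_j$ for every $j$.

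Now look at the $j_0$-th copy of $K_{k_i}$ inside $G_i$. Its $k_i$ vertices form a clique, hence must receive $k_i$ pairwise distinct colours, each lying in $L(v) = \bigcup_{j=1}^p S_{j_0,j}$, and by the disjointness of the $C'_j$'s this sum is a disjoint union. For every $j \notin J_i$ we have $S_{j_0,j} = T_j \subseteq C'_j \setminus U_j$, which means that not a single colour of $S_{j_0,j}$ is used anywhere on $G_i$ — in particular not on this clique. Consequently the $k_i$ colours on this copy of $K_{k_i}$ all come from $\bigsqcup_{j \in J_i} S_{j_0,j}$, a set of size $\sum_{j \in J_i} k'_j < k_i$. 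This contradicts the existence of $k_i$ distinct colours, so the assumption fails and $\sum_{j \in J_i} k'_j \ge k_i$.

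The argument is essentially one of combinatorial design rather than hard analysis; the only delicate point is the choice $|C'_j| = 2k'_j - 1$, which is precisely what guarantees step 2 — that we can always find a $k'_j$-subset of $C'_j$ disjoint from the at most $k'_j-1$ colours already committed by $G_i$ to $C'_j$. If instead one had $|C'_j|=2k'_j-2$, this disjointness argument would break down, so the main conceptual step (and the one most worth double-checking when writing the proof) is verifying that these sizes interact correctly with the enumeration $\mathcal S$.
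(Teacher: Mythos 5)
Your proof is correct and is essentially the paper's own argument: both select the copy of $K_{k_i}$ in $G_i$ indexed by a tuple whose components in every non-occupied colour group $C'_j$ avoid all colours used on $G_i$ (possible exactly because $|C'_j|=2k'_j-1$ and fewer than $k'_j$ colours of $C'_j$ are used), forcing the $k_i$ distinct colours of that clique into $\bigcup_{j\in J_i}S_{l,j}$, a set of size $\sum_{j\in J_i}k'_j$. The only difference is that you phrase this as a contradiction while the paper states the bound directly; the content is identical.
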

\begin{proof}
	 For each $j \notin J_i$, there is a set $A_j$  of $k'_j$ colours from $C'_j$ not used by any vertex in $G_i$. 
	 Let $S_l= (S_{l,1}, S_{l,2}, \ldots, S_{l,p}) \in {\cal S}$ be an element with  $S_{l,j} = A_j$  for all $j \notin J_i$. Then vertices in the $l$th copy of $K_{k_i}$ in $G_i$ does not use any colour from 
	 $\bigcup_{j \notin J_i}C'_j$. In other words, the colours used by vertices from the $l$th copy of $K_{k_i}$ of $G_i$ are all from 
	 $\bigcup_{j \in J_i} S_{l,j}$. As vertices in the $l$th copy of $K_{k_i}$ of $G_i$ are coloured by distinct colours, we conclude that 
	  $$|\bigcup_{j \in J_i} S_{l,j}| = \sum_{j \in J_i} |S_{l,j}| = \sum_{j \in J_i} k'_j \ge k_i.$$ 
\end{proof}

	 Let $\lambda''=\{k''_1,k''_2, \ldots, k''_q\}$, where $k''_i =  \sum_{j \in J_i} k'_j$. Then $\lambda''$ is obtained from $\lambda$ by increasing some parts of $\lambda$, and $\lambda'$ is a refinement of $\lambda''$. Hence $\lambda \le \lambda'$. \qed

\begin{corollary}
	\label{cor1}
	Assume $\lambda$ and $\lambda'$ are partitions of $k$.  Then every $\lambda$-choosable graph is $\lambda'$-choosable if and only if $\lambda'$ is a refinement of $\lambda$. 
\end{corollary}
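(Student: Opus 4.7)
The plan is to deduce Corollary \ref{cor1} as an immediate consequence of Theorem \ref{main1}. By that theorem, the condition ``every $\lambda$-choosable graph is $\lambda'$-choosable'' is equivalent to the relation $\lambda \le \lambda'$, so the task reduces to showing that, when $\lambda$ and $\lambda'$ are partitions of the \emph{same} integer $k$, the relation $\lambda \le \lambda'$ collapses to ``$\lambda'$ is a refinement of $\lambda$''.

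One direction is immediate: if $\lambda'$ refines $\lambda$, then taking $\lambda'' = \lambda$ in Definition \ref{def-order} witnesses $\lambda \le \lambda'$. For the other direction, I would unpack the definition. By assumption there is an intermediate partition $\lambda''$ of some integer $k'' \ge k$, obtained from $\lambda$ by increasing some parts, such that $\lambda'$ is a refinement of $\lambda''$. Refinement preserves the sum of parts, so $\lambda''$ and $\lambda'$ have the same total, namely $k$. Thus $k'' = k$, which forces no part of $\lambda$ to have been genuinely increased; that is, $\lambda'' = \lambda$. Hence $\lambda'$ is a refinement of $\lambda$ itself.

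There is essentially no obstacle — the substantive work is already done in Theorem \ref{main1}, and the corollary simply records what that theorem specializes to when $\lambda$ and $\lambda'$ share a common total. The only point to verify carefully is the sum-preservation argument used above, but this is immediate from Definition \ref{def-partition} together with the fact that subdividing a part into a partition of that part keeps the overall sum fixed.
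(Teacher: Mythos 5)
Your proposal is correct and matches the paper's intent exactly: the corollary is stated without a separate proof precisely because it is the specialization of Theorem \ref{main1} to the case $k'=k$, and your observation that the sum-preservation of refinement forces $\lambda''=\lambda$ is the (routine) detail the paper leaves implicit. Nothing further is needed.
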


\section{$\lambda$-choosability of special graphs}
\label{sec-planar}

Colouring of planar graphs motivated many colouring concepts and problems. 
 In the language of $\lambda$-choosability, the four colour theorem says that every planar graph is $\{1,1,1,1\}$-choosable. Voigt showed that there are planar graphs that are not $\{4\}$-choosable. 
The other partitions of $4$ are $\{1,3\}, \{2,2\}$ and $\{1,1,2\}$. 
It is natural to ask whether every planar graph $G$ is $\lambda$-choosable for  any of these partitions $\lambda$ of $4$.  Choi and Kwon \cite{CK2017} defined a {\em $t$-common $k$-assignment} of a graph $G$ to be a $k$-list assignment $L$ of $G$ with $|\cap_{v \in V(G)}L(v)| \ge t$. In other words, a
$t$-common $k$-assignment is precisely a $\lambda$-assignment with $\lambda=\{1,1,\ldots, 1, k-t\}$, where the number $1$ has multiplicity $t$. Choi and Kwon \cite{CK2017} constructed a planar graph with a $1$-common $4$-assignment $L$ for which $G$ is not $L$-colourable. Very recently, Kemnitz and Voigt \cite{KV2018} constructed a planar graph $G$ and a $2$-common $4$-assignment $L$ of $G$ for which $G$ is not $L$-colourable. In other words, there are planar graphs that are not $\{1,1,2\}$-choosable. This implies that for any partition $\lambda$ of $4$ different from $\{1,1,1,1\}$, there is a planar graph which is not $\lambda$-choosable.
So the Four Colour Theorem is tight in the refined scale of $\lambda$-choosability. Nevertheless, many interesting problems concerning $\lambda$-choosability of subfamilies of planar graphs remains open.  
 
It was shown by Mirzakhani \cite{Mirzakhani} that there are $3$-chromatic planar graphs that are not $4$-choosable. In contrast to this result, we have the following observation.

\begin{observation}
	Every $3$-chromatic planar graph is $\{1,3\}$-choosable.
\end{observation}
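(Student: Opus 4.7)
The plan is to reduce the question to the well-known fact that bipartite planar graphs are $3$-choosable (Alon--Tarsi), by using one colour class of a proper $3$-colouring to absorb the ``shared'' colour in a special $\{1,3\}$-assignment.

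By Lemma \ref{lem0}, it suffices to show $G$ is $L$-colourable for every \emph{special} $\{1,3\}$-assignment $L$. For such an $L$ the colour group $C_1$ is a singleton, say $C_1=\{c\}$, so every vertex list has the form $L(v)=\{c\}\cup M(v)$ with $M(v)\subseteq C_2$ and $|M(v)|=3$. The first step is to fix any proper $3$-colouring of $G$ (which exists since $\chi(G)\le 3$), pick one colour class $V_1$, and assign colour $c$ to every vertex of $V_1$. Since $V_1$ is independent and $c$ does not appear among the lists $M(v)$ for $v\notin V_1$, no conflict is created by this partial colouring, and it remains to produce a proper colouring of $H:=G-V_1$ in which each vertex $v$ gets a colour from the $3$-list $M(v)$.

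The second step is to observe that $H$ is a planar bipartite graph: it is planar as a subgraph of $G$, and the two remaining colour classes of the fixed $3$-colouring give a proper $2$-colouring of $H$. Then I would invoke Alon--Tarsi's theorem that every planar bipartite graph is $3$-choosable to obtain the desired proper $M$-colouring of $H$. Combining this colouring with the colour $c$ on $V_1$ yields a proper $L$-colouring of $G$, completing the proof.

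The main substantive ingredient here is Alon--Tarsi's $3$-choosability of planar bipartite graphs; everything else is bookkeeping. The only point where one must be a bit careful is the reduction via Lemma \ref{lem0} to the special-assignment case, which ensures that a \emph{single} colour is shared by every list rather than only a colour \emph{group}; this is exactly what allows us to spend one entire colour class of the $3$-colouring on the shared colour and leave a genuine $3$-list at each remaining vertex.
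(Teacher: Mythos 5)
Your proof is correct and follows essentially the same route as the paper: colour one independent class of a proper $3$-colouring with the part-$1$ colour group and apply the $3$-choosability of bipartite planar graphs to the rest. The only difference is your reduction via Lemma~\ref{lem0} to special assignments, which is harmless but unnecessary --- since $C_1$ and $C_2$ are disjoint and the chosen class is independent, each of its vertices can simply take its own (possibly different) unique colour from $C_1$ without any conflict.
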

\begin{proof}
	Assume $G$ is a $3$-chromatic planar graph and $L$ is a $\{1,3\}$-assignment of $G$.  Let $C_1 \cup C_2$ be a partition of $\bigcup_{v \in V(G)}L(v)$ such that $|L(v) \cap C_1|= 1$ and $|L(v) \cap C_2|= 3$ for every vertex $v$.  Let $V_1,V_2,V_3$ be a partition of $V(G)$ into three independent sets.   It is known 
	that bipartite planar graphs are $3$-choosable. Thus  there is a proper colouring $f$ of $G[V_1 \cup V_2]$ such that $f(v) \in L(v) \cap C_2$ for every $v \in V_1 \cup V_2$. For $v \in V_3$, let $f(v)$ be the unique colour in $L(v) \cap C_1$. Then $f$ is an $L$-colouring of $G$.  
\end{proof}

Another observation is about $\{2,2\}$-choosable planar graphs. 
 
 \begin{observation}
 	Assume $G$ is a plane graph and $G^*$ is the dual of $G$. Assume $G^*$ has a spanning Eulerian subgraph $H$ such that each face of $H$ is incident to at most two connected components of $H$, and moreover, if a face $F$ of $H$ is incident to two connected components of $H$, then there are an even number of edges of $G$ connecting these two components of $H$. Then $G$ is $\{2,2\}$-choosable. 
 \end{observation}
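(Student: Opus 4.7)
The plan is to exploit the Eulerian structure of $H$ to $2$-face-colour it (a classical fact: an Eulerian plane graph has face-chromatic number at most $2$), and then let every $v \in V(G)$ inherit the colour of the face of $H$ in which it lies, giving a vertex partition $V(G) = V_1 \cup V_2$. Given a $\{2,2\}$-assignment $L$ with disjoint colour groups $C_1, C_2$ and $L(v) = L_1(v) \cup L_2(v)$, I would colour each vertex of $V_i$ using $L_i(v) \subseteq C_i$. Edges of $G$ joining $V_1$ to $V_2$ are precisely those whose duals lie in $E(H)$, because $H$-edges separate the two $H$-faces containing the endpoints; across such edges the two colours come from disjoint groups and never conflict. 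The whole problem thus reduces to showing that, for every face $F$ of $H$, the subgraph $G_F := G[V_F]$ (with $V_F = V(G) \cap F$) is $2$-choosable, whence it admits a proper colouring from the restricted $2$-list assignment.

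For a face $F$ incident to exactly one component of $H$, I would argue $G_F$ is a forest: any cycle of $G_F$ would bound a disk in $F$ containing a face of $G$, whose associated vertex of $G^*$ (equivalently of $H$, since $H$ spans $G^*$) would lie in the interior of $F$, giving a second component of $H$ incident to $F$ and contradicting the case assumption. Forests are $1$-degenerate and hence $2$-choosable by the Erd\H{o}s--Rubin--Taylor theorem.

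The substantive case is when $F$ is incident to two components $H_1, H_2$ of $H$ (an isolated $H$-vertex in the interior of $F$ counts as its own component). If $G_F$ has no cycle, it is a forest and so $2$-choosable; otherwise I would show that $G_F$ has a unique cycle $C$ and that $C$ has even length. The first topological fact is that $C$ separates $F$ into two subregions, one containing $H_1$ and the other $H_2$; this forces the two $G$-faces on opposite sides of each edge of $C$ to have $G^*$-vertices on different components, so every edge of $C$ is a \emph{connecting edge} in the sense of the hypothesis. The second fact is that any other edge of $G_F$ lies entirely inside one of the two subregions, so its adjacent $G$-faces both have $G^*$-vertices on a single component and it is not a connecting edge; this also rules out any second cycle, since such a cycle would lie in one subregion and its edges could not be connecting, contradicting the first fact applied to it. Hence the connecting edges in $F$ are exactly the edges of $C$, so $|C|$ equals their count, which is even by hypothesis; Erd\H{o}s--Rubin--Taylor then gives $2$-choosability of $G_F$. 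The main obstacle is this topological step: verifying that a non-null-homotopic cycle of $G_F$ partitions $F$ into two regions each incident to a single component of $H$, so that every connecting edge must lie on that cycle. Once this is established, the face-by-face colourings combine into the desired proper $L$-colouring of $G$.
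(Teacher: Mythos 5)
Your proposal is correct and follows essentially the same route as the paper's proof: $2$-colour the faces of the Eulerian plane graph $H$, partition $V(G)$ according to which face of $H$ contains each vertex, observe that cross edges are exactly the duals of $E(H)$ and are neutralized by the disjoint colour groups, and reduce to $2$-choosability of the subgraph induced inside each face, which is a forest (one incident component) or unicyclic with an even cycle formed by the connecting edges (two incident components). In fact you supply more of the topological justification for that last step than the paper does, which simply asserts the tree/unicyclic structure.
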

 \begin{proof}
 	Since $H$ is an Eulerian plane graph, its faces can be properly $2$-coloured. I.e., the faces of $H$ can be partitioned into two independent sets $A$ and $B$. If a face $F$ of $H$ is incident to one  connected component of $H$, then the subgraph of $G$ induced by vertices contained in $F$ is a tree. If a face $F$ of $H$ is incident to two connected components, then the subgraph of $G$ induced by vertices contained in $F$ is a connected uni-cyclic graph (i.e., contains exactly one cycle) and the length of the cycle is the number of edges in $G^*$ connecting the two connected components of $H$. If two faces $F_1, F_2$ are not adjacent in $H$, then no vertex of $G$ contained in $F_1$ is adjacent to a vertex of $G$ contained in $F_2$. Let $X$ be the set of vertices of $G$ contained in faces in $A$, and $Y$ be the set of vertices of $G$ contained in faces in $B$. Then $X \cup Y$ is a partition of vertices of $G$, and each component of $G[X]$ or $G[Y]$ is either a tree or a uni-cyclic graph, and moreover, all the cycles are of even lengths. Therefore each of $G[X]$ and $G[Y]$ is $2$-choosable. 
 	
 	Assume $L$ is a $\{2,2\}$-assignment of $G$, and $C_1 \cup C_2$ are the corresponding colour groups. Then there is an $L$-colouring of $G$ such that   vertices in $X$ are coloured by colours from $C_1$ and vertices in $Y$ are coloured by colours from $C_2$. 
 \end{proof}

Voigt \cite{Voigt} constructed the first non-4-choosable planar graph. A few other constructions of non-$4$-choosable graphs are given later, each with certain special feature \cite{CK2017,KV2018,Mirzakhani,Zhu}. 
Here we present a new construction of a non-$\{1,3\}$-choosable planar graph.  A graph $G$ is {\em uniquely $k$-colourable} if there is a unique partition of $V(G)$ into $k$ independent sets.
Assume $G$ is uniquely $k$-colourable, and $V_1, V_2, \ldots, V_k$ is the unique partition of $V(G)$ into $k$ independent sets. There are $k!$ ways of assigning the $k$ colours $\{1,2,\ldots, k\}$ to the independent sets. So there are actually $k!$ $k$-colourings of $G$. If $G$ is a uniquely $4$-colourable planar graph, then there are exactly $24$ $4$-colourings of $G$.

For a plane graph $G$, we denote by ${\cal F}(G)$ the set of faces of $G$.

\begin{lemma}
	\label{lemma-unique}
	There exists a uniquely $4$-colourable plane triangulation $G'$, a set ${\cal F}$ of $24$ faces of $G'$ and a one-to-one correspondence $\phi$ between ${\cal F}$ and the $24$ $4$-colourings of $G'$ such that for each $F \in {\cal F}$, $\phi_F(V(F)) = \{1,2,3\}$, where $\phi_F$ is the $4$-colouring of $G'$ corresponding to $F$ and $V(F)$ is the set of vertices incident to $F$.
\end{lemma}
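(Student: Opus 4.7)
My plan is to build $G'$ by iterated stacking starting from $K_4$ and then hand-pick 24 faces to carry the desired colourings. Recall that stacking---inserting a new vertex inside a triangular face and joining it to the three boundary vertices---preserves unique $4$-colourability of a plane triangulation, because the new vertex has three neighbours in three different colour classes and is therefore forced into the fourth class. Fix the four classes $V_1,V_2,V_3,V_4$ of the unique partition. Every triangular face of $G'$ meets exactly three of these classes, so it carries a well-defined \emph{type} $\ell\in\{1,2,3,4\}$: the missing class. There are $4!=24$ proper $4$-colourings of $G'$ (the permutations of the four colours over $V_1,\ldots,V_4$), and for each $\ell$ exactly $6$ of them send $V_\ell$ to colour $4$.

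Next I track how stacking changes the type-count vector $(f_1,f_2,f_3,f_4)$: inserting a vertex inside a face of type $\ell$ destroys that face and creates one new face of each of the other three types. Starting from $K_4$ with $(1,1,1,1)$ and cycling through the four types three times (twelve stackings, each performed on a face of the currently required type, which is always available), I obtain a uniquely $4$-colourable plane triangulation $G'$ with type-count vector $(7,7,7,7)$. I then let $\mathcal{F}$ consist of any six faces of each type, so $|\mathcal{F}|=24$, and define $\phi$ by choosing, independently for each $\ell$, an arbitrary bijection between the six selected faces of type $\ell$ and the six $4$-colourings that send $V_\ell$ to colour $4$. For any resulting pair $(F,\phi_F)$, the three vertices of $F$ lie in the classes indexed by $\{1,2,3,4\}\setminus\{\ell\}$, and $\phi_F$ restricts on them to a permutation of $\{1,2,3\}$; hence $\phi_F(V(F))=\{1,2,3\}$, as required.

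The main thing to get right is the type-count arithmetic: one cannot reach $(6,6,6,6)$ exactly from $(1,1,1,1)$ by stackings, since $f_i=1+X-2x_i=6$ with $\sum x_i=X$ forces $x_i=5/2$, so one must aim a little higher (e.g.\ $(7,7,7,7)$) and discard the spare faces. Beyond this small piece of accounting, the only remaining work is to verify that each stacking in the chosen schedule is performable (which is immediate from the type-count dynamics) and that unique $4$-colourability propagates step by step, both of which are routine.
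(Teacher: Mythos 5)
Your proof is correct. Both you and the paper build a stacked (Apollonian) triangulation and rely on the same two facts---stacking preserves unique $4$-colourability, and a face with vertex set meeting the classes $\{1,2,3,4\}\setminus\{\ell\}$ receives colours $\{1,2,3\}$ exactly under the six colourings sending $V_\ell$ to colour $4$---but the mechanism for producing the bijection is genuinely different. The paper first takes \emph{any} uniquely $4$-colourable stacked triangulation with $24$ faces and an \emph{arbitrary} bijection $\phi$ between faces and colourings, and then repairs each ``bad'' face $F$ (one with $\phi_F(V(F))\ne\{1,2,3\}$) by one further stacking inside $F$: the new vertex is forced to receive the missing colour from $\{1,2,3\}$, so one of the three resulting sub-faces has vertex colours $\{1,2,3\}$ under $\phi_F$ and replaces $F$ in $\mathcal{F}$. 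You instead engineer the face-type distribution in advance via the invariant $(f_1,f_2,f_3,f_4)$, reach $(7,7,7,7)$, and then match types to colour classes; your observation that $(6,6,6,6)$ is unreachable (so one must overshoot and discard) is a nice piece of accounting that the paper's repair-after-the-fact method simply sidesteps, since it never needs the types to be balanced. The paper's route is shorter and requires no counting; yours gives a more explicit, planned construction and makes the combinatorics of the face types transparent. Either way the lemma follows.
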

\begin{proof}
	Build a plane triangulation which is uniquely $4$-colourable and which has $24$ faces. Such a graph can be constructed by starting  from a triangle $T=uvw$, and repeat the following: choose a face $F$
	(which is a triangle), add a vertex $x$ in the interior of $F$ and connect $x$ to each of the three vertices of $F$. Each iteration
	of this procedure increases the number of faces of $G$ by $2$. We stop when there are $24$ faces.
	
	Let $\phi$ be an arbitrary one-to-one correspondence between the $24$   $4$-colourings of $G$ and the $24$ faces of $G$.
	For each face $F$ of $G$, we denote by $\phi_F$ the corresponding $4$-colouring of $G$.
	
	Let ${\cal F}'$ be the set of faces $F$ for which $\phi_F(V(F)) \ne \{1,2,3\}$.
	
	For each $F \in {\cal F}'$, add a vertex $z_F$ in the interior of $F$, connect $z_F$ to each of the three vertices of $F$. The colouring $\phi_F$ is uniquely extended to $z_F$.   Hence the resulting plane triangulation $G'$ is still uniquely $4$-colourable.
	The face $F$ of $G$ is partitioned into three faces of $G'$. The vertices of one of the three faces  are coloured by $\{1,2,3\}$. We denote this face by $F'$ and  use this face of $G'$ instead of the face $F$ of $G$ to be associated with   the colouring $\phi_F$ (and we denote this colouring by $\phi_{F'}$ after this operation).
	
	Let $${\cal F}=\{F': F \in {\cal F}'\} \cup ({\cal F}(G)-{\cal F}').$$
	
	The   one-to-one correspondence   between ${\cal F}$ and the $24$ $4$-colourings of $G'$ defined above satisfies the requirements of the lemma.
\end{proof}

Now we are ready to construct a planar graph $G$ and a $\{1,3\}$-assignment $L$ of $G$ such that $G$ is not $L$-colourable.  

Let $G'$ be a    uniquely $4$-colourable plane triangulation,  and let   ${\cal F}$ be a set of $24$ faces of $G'$ and  $\phi$  a one-to-one correspondence   between ${\cal F}$ and the $24$ $4$-colourings of $G'$ so that for each $F \in {\cal F}$, $\phi_F(V(F))=\{1,2,3\}$.

For each face $F \in {\cal F}$,  for  $i \in \{1,2,3\}$, let $v_{F,i}$ be the vertex of $F$ with $\phi_F(v_{F,i})=i$.
Note that two faces $F, F' \in {\cal F}$ may share a vertex $v$. In this case, if $\phi_F(v)=i$ and $\phi_{F'}(v)=j$, then $v=v_{F,i}=v_{F',j}$.

\begin{itemize}
	\item   add a triangle $T_F = a_Fb_Fc_F$ in the interior of $F$;
	\item   connect $a_F$ to   $v_{F,1}$ and $v_{F,2}$; connect $b_F$ to   $v_{F,1}$ and $v_{F,3}$; connect $c_F$ to $v_{F,2}$ and $v_{F,3}$.
\end{itemize}

We denote the resulting plane triangulation by $G$. 

Let $L$ be the $4$-assignment of $G$ defined as follows: 

\[
L(v)=\begin{cases}
\{1,2,3,4\}, &\text{if $v \in V(G')$}, \cr
\{1,2,4,5\}, &\text{if $v=a_F$ for some $F \in {\cal F}$}, \cr
\{1,3,4,5\}, &\text{if $v=b_F$ for some $F \in {\cal F}$}, \cr
\{2,3,4,5\}, &\text{if $v=c_F$ for some $F \in {\cal F}$}.
\end{cases}
\]

The set of colours used in the lists is $C=\{1,2,3,4,5\}$ and 
$C_1 = \{4\}, C_2=\{1,2,3,5\}$ is a partition of $C$ and for every vertex $v$ of $G$, $|L(v) \cap C_1|=1$ and $|L(v) \cap C_2| =3$. So $L$ is a $\{1,3\}$-assignment of $G$. 

Now we show that $G$ is not $L$-colourable.

Assume $\psi$ is an $L$-colouring of $G$. Then the restriction of $\psi$ to $G'$ is a proper $4$-colouring of $G'$.
As $G'$ is uniquely $4$-colourable, the restriction of $\psi$ to $G'$
equals $\phi_F$ for some $F \in {\cal F}$. Consider the triangle $T_F$. Vertex $a_F$ is adjacent to   vertices of colours $1$ and $2$ and has list $L(a_F)=\{1,2,4,5\}$. Therefore $\phi(a_F) \in \{4,5\}$. 
Vertex $b_F$ is adjacent to   vertices of colours $2$ and $3$ and has list $L(b_F)=\{2,3,4,5\}$. Therefore $\phi(b_F) \in \{4,5\}$. 
Vertex $c_F$ is adjacent to   vertices of colours $1$ and $3$ and has list $L(c_F)=\{1,3,4,5\}$. Therefore $\phi(c_F) \in \{4,5\}$. This is a contradiction, as $a_Fb_Fc_F$ form a triangle, and hence cannot be coloured by colours $4,5$. 

This completes the proof that $G$ is not $L$-colourable. Hence $G$ is a planar graph which is not $\{1,3\}$-choosable. \qed

\bigskip

Given a assignment $L$ of a graph $G$, let $$||L|| =|\{L(x): x \in V(G)\}|.$$ 

The cardinality $||L||$ is another measure of the complexity of a assignment $L$.
The example above shows that there is a $4$-assignment  $L$ of a planar graph $G$ such that   $||L|| = 4$ and $G$ is not $L$-colourable. It is easy to see that if $L$ is a $4$-assignment of a planar graph with $||L|| \le 2$, then $G$ is $L$-colourable. Indeed, if $||L||=1$, then the statement is the same as the four colour theorem. If $||L||=2$, then without loss of generality, we may assume that $L(x) \in \{\{1,2,3,4\}, \{i, i+1,i+2,i+3\}\}$ for some $2 \le i \le 5$. Let $\phi: V(G) \to \{1,2,3,4\}$ be a $4$-colouring of $G$. Let $\psi(v)=c$, where $c$ is   the unique colour $c$ in $L(v)$
for which $c \equiv \phi(v) \pmod{4}$. It is easy to check that $\psi$ is an $L$-colouring of $G$. 

It remains an open question whether there is a planar graph  $G$ and a $4$-assignment $L$ of $G$ with $||L||=3$ such that $G$ is not $L$-colourable. 

\begin{question}
	\label{g6}
	Is it true that for any   planar graph $G$, and    $4$-assignment $L$ of $G$ with $||L||=3$,   $G$ is  $L$-colourable?
\end{question} 

The following Theorem connects this question to weakly $4$-choosable graphs and $\{1,1,2\}$-choosable graphs.

\begin{theorem}
	\label{thm-implies2}
	Assume $G$ is weakly $4$-choosable and also $\{1,1,2\}$-choosable. If $L$ is a $4$-assignment of $G$ with $||L||=3$, then $G$ is $L$-colourable.   
\end{theorem}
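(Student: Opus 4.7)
The plan is to recast the given $4$-assignment $L$ as either a symmetric $4$-assignment (so that weak $4$-choosability applies) or as a $\{1,1,2\}$-assignment (so that $\{1,1,2\}$-choosability applies). Since $||L||=3$, there are only three distinct lists $A$, $B$, $C$, each of size $4$. I classify each colour of $A\cup B\cup C$ by the subset of $\{A,B,C\}$ containing it, and let $a_A,a_B,a_C,a_{AB},a_{AC},a_{BC},a_{ABC}$ denote the sizes of the seven Venn atoms.

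First, if every atom has even cardinality, I would pair up colours arbitrarily inside each atom and relabel each pair as $\{+i,-i\}$ for a distinct positive integer $i$. Since every pair lies inside a single atom, it is either entirely contained in or entirely disjoint from each of $A$, $B$, $C$, so these lists become symmetric $4$-subsets of the integers; hence $L$ is (after this relabelling) a symmetric $4$-assignment, and weak $4$-choosability of $G$ yields an $L$-colouring.

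Otherwise some atom has odd size, and I would construct a partition $\bigcup_{v\in V(G)}L(v)=C_1\cup C_2\cup C_3$ with $|X\cap C_1|=|X\cap C_2|=1$ and $|X\cap C_3|=2$ for every $X\in\{A,B,C\}$, thereby exhibiting $L$ as a $\{1,1,2\}$-assignment. The construction splits on $t:=a_{ABC}$. If $t\ge 2$, take $C_1$ and $C_2$ to be two distinct singletons from $A\cap B\cap C$. If $t=1$, put the unique triple colour into $C_1$ and, in the reduced size-$3$ lists $A'=A\setminus C_1,B',C'$, pick $C_2$ as either a ``pair + complementary singleton'' (one colour from some $X\cap Y\setminus Z$ together with one from $Z\setminus(X\cup Y)$) or as three singletons, one from each of the three singleton atoms, whichever is available. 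If $t=0$, apply the same family of recipes directly to produce $C_1$ and, after removing it, $C_2$.

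The main obstacle is to check that some recipe is always available. Using the three identities $a_X+\sum_{Y\ne X}a_{XY}+a_{ABC}=4$ for $X\in\{A,B,C\}$, a short case analysis on which atoms are empty shows that the \emph{only} configuration in which no recipe for $C_1$ works is the ``triangle'' $a_{AB}=a_{AC}=a_{BC}=2$ with all other atoms empty; since the triangle has every atom of even size, it is already handled by the symmetric route and may be excluded here. Once $C_1$ is fixed, the same identities rule out the reduced configuration being a triangle (the arithmetic $|A|=|B|=|C|=4$ fails in each subcase), so a recipe for $C_2$ survives, and then $C_3=\bigl(\bigcup_{v\in V(G)}L(v)\bigr)\setminus(C_1\cup C_2)$ automatically satisfies $|X\cap C_3|=|X|-2=2$ for each $X\in\{A,B,C\}$.
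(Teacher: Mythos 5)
Your proof is correct, and it takes a genuinely different route from the paper's. The paper picks a counterexample minimizing $\bigl|\bigcup_{v\in V(G)}L(v)\bigr|$ and uses a colour-exchange argument to show that the symmetric difference of any two of the lists $A,B,C$ must lie inside the third; after this normalization only two configurations survive, $|A\cup B|=5$ (where $|A\cap B\cap C|=2$ gives a $\{1,1,2\}$-assignment directly) and $|A\cup B|=6$ (where the three pairwise intersections partition the colours into pairs, yielding a symmetric assignment after renaming). You instead work with the raw, un-normalized list system and analyse the seven Venn atoms directly: the all-atoms-even case is relabelled into a symmetric assignment, and otherwise you build the colour groups $C_1,C_2$ by hand from exact-hitting-set ``recipes''. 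I checked your two asserted case analyses and they hold: for size-$4$ lists the only recipe-free configuration is indeed $a_{AB}=a_{AC}=a_{BC}=2$ with all other atoms zero (which is all-even and hence excluded), and for the reduced size-$3$ lists the analogous bad configuration forces $2(a'_{AB}+a'_{AC}+a'_{BC})=9$ or a list of size at least $7$, both impossible. What each approach buys: the paper's exchange argument is shorter and collapses everything to two clean configurations, but it leans on a minimality induction and on the separately established fact that $\|L\|\le 2$ is already colourable; your argument is fully constructive and self-contained --- given $L$ it explicitly produces either the symmetric relabelling or the $\{1,1,2\}$ colour groups --- at the price of a longer atom-by-atom case analysis, which you should write out in full rather than assert if this were to be submitted.
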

\begin{proof}
	Assume $G$ is a counterexample.
	I.e.,   $G$   is weakly $4$-choosable and   $\{1,1,2\}$-choosable, but there is  a $4$-assignment $L$ of $G$ with $||L||=3$ such that $G$ is not $L$-colourable.

	We choose the counterexample so that $|\bigcup_{v \in V(G)}L(v)|$ is minimum.
	
	Assume $A, B, C$ are $4$-sets and for each vertex $v$ of $G$, $L(v) \in \{A, B,C\}$. 
	
	First we observe that $(A - B) \cup (B-A) \subseteq C$, which implies that $|A \cup B| \le 6$.  Assume this is not true. By symmetry, we may assume that $A-B \not\subseteq C$. Hence $A \not\subseteq B \cup C$. 
	
	If $i \in A-(B \cup C)$, then let $i'$ be any colour in $(B \cup C)-A$ and let $A'=(A-\{i\} ) \cup \{i'\}$. Let $L'$ be the assignment of $G$ for which $L'(v) = A'$ if $L(v)=A$, and $L'(v)=L(v)$ otherwise. Then we have $||L'|| \le 3$.  Since 
	$|\bigcup_{v \in V(G)}L'(v)| <  |\bigcup_{v \in V(G)}L(v)|$, by our choice of $(G,L)$, we know that $G$ has an $L'$-colouring $f'$. Let $f(v)=i$ if $f'(v)=i'$ and $L(v)=A$, and $f(v)=f'(v)$ otherwise. It is easy to verify that $f$ is a proper $L$-colouring of $G$. A contradiction.

	If $|A \cup B |=5$, then $|A \cap B \cap C|=2$ and hence $L$ is a  $\{1,1,2\}$-assignment of $G$. By our assumption $G$ is $L$-colourable.
	
	If $|A \cup B |=6$, say $A=\{1,2,3,4\}$ and $B=\{1,2,5,6\}$, then  $ C =(A - B) \cup (B-A)  =\{3,4,5,6\}$. Thus colours in $L(v)$ for each vertex $v$ come in pairs; $(1,2), (3,4)$ and 
	$(5,6)$. Change the names of colours as follows: $2$ $\to$ $-1$, $4$ $\to$ $-3$ and $6$ $\to$ $-5$. Then $L$ is a symmetric $4$-assignment of $G$. 	 Hence $G$ is $L$-colourable.
\end{proof}

Next we consider $\lambda$-choosability of line graphs. Vizing's motivation for introducing the concept of list colouring of graphs was to study  list colouring of line graphs as a tool to establish the total chromatic number of graphs.  The following conjecture, known as   the List Colouring Conjecture (LCC),    was formulated independently by
Vizing, by Gupta, by Albertson and Collins, and by Bollob\'{a}s and Harris 
(see \cite{BKW}). We say a graph $G$ is {\em edge $k$-colourable} (respectively, {\em edge $k$-choosable}) if its line graph is $k$-colourable (respectively, $k$-choosable).

\begin{guess} [LCC]
	\label{conj-lcc}
	Every edge $k$-colourable   graph is edge $k$-choosable. 
\end{guess}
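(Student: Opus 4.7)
The statement to be proved is the full List Colouring Conjecture, a long-standing open problem, so any plan I sketch must aim at making partial progress rather than a complete resolution. My starting point would be Galvin's theorem, which settles the bipartite case by the kernel method: fix a proper $k$-edge-colouring of a bipartite multigraph $G$ and orient each edge of $L(G)$ according to the colour classes together with the bipartition of $V(G)$; one then verifies that every induced subdigraph of $L(G)$ has a kernel, and the standard kernel-colouring lemma yields an $L$-colouring for any $k$-assignment. The first step of my plan would be to take the hypothesised proper $k$-edge-colouring of an arbitrary graph $G$ and try to construct a kernel-perfect orientation of $L(G)$ with maximum outdegree $k-1$, working without the help of a bipartition.

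A parallel route is the algebraic one, via the Combinatorial Nullstellensatz of Alon and Tarsi: expand the graph polynomial of $L(G)$ and show that the monomial $\prod_{e \in E(G)} x_e^{k-1}$ has nonzero coefficient. For line graphs of $K_n$ with $n$ even, $1$-factorisations of $K_n$ supply very structured orientations, and this is the mechanism that drives the weaker edge $\lambda$-choosability result that the paper actually proves. I would try to extend such polynomial calculations from $K_n$ to broader classes by decomposing an arbitrary $k$-edge-colourable graph into near-regular pieces whose line graphs are more tractable, for instance by peeling off perfect matchings supplied by the given edge-colouring and running an inductive argument on the residual graph.

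The main obstacle is precisely what has blocked the LCC for four decades. The kernel method succeeds in the bipartite setting because the colour classes can be coherently ordered across both endpoints of every edge, and no substitute for the bipartition is known in the general case; the polynomial method produces explicit coefficient formulas that are difficult to bound uniformly. A realistic plan would therefore combine the two, using a proper $k$-edge-colouring to set up an orientation and an algebraic identity to certify nonvanishing of the relevant coefficient, and aim at intermediate targets — Galvin's theorem extended to graphs of large odd girth, or the complete-graph case pushed from even to odd $n$, or $K_n$ being edge $\lambda$-choosable for broader families of partitions $\lambda$ of $n-1$ than parts of size at most $3$ — rather than attack the whole conjecture at once.
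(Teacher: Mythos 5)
The statement you were asked about is not a theorem of the paper but the List Colouring Conjecture itself, stated as an open conjecture; the paper offers no proof and explicitly notes that it is unknown even for complete graphs of even order. You correctly recognise this and do not claim a proof, so there is no gap to point out in the usual sense. Your survey of the two standard attack routes (Galvin's kernel method for bipartite multigraphs and the Alon--Tarsi polynomial method) is accurate, and your proposed intermediate targets line up well with what the paper actually establishes: Theorem~\ref{thm-line} proves edge $\lambda$-choosability of edge $k$-colourable graphs when all parts of $\lambda$ are at most $2$ (by splitting the given edge-colouring into matchings and unions of two matchings, whose components are paths and even cycles, hence $2$-edge-choosable), and of $K_n$ for even $n$ when all parts are at most $3$ (by grouping three consecutive rotated matchings of the standard $1$-factorisation into a $3$-edge-colourable cubic planar graph, which is $3$-edge-choosable). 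The paper's Conjecture~\ref{conj-wlcc} is precisely the kind of weakening you suggest pursuing. One small correction of emphasis: the paper's $K_n$ argument does not invoke the Combinatorial Nullstellensatz directly but rather the known list-edge-colourability of $3$-edge-colourable cubic planar graphs; extending the grouping to four or more consecutive matchings is where the difficulty genuinely resides.
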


This conjecture received a lot of attention, however, progress is slow. In particular, it is unknown if the conjecture holds for complete graphs of even order.

The concept of $\lambda$-choosability suggests intermediate problems for many existing challenging open problems. 
If Conjecture \ref{conj-lcc} is true, then the $\lambda$-choosability problem of line graphs would collapse   to a colourability problem.

\begin{theorem}
	\label{thm-line}
	If $G$ is an edge $k$-colourable  graph and $\lambda=\{k_1, k_2, \ldots, k_q\}$ is a partition of $k$ in which each part has size at most $2$, i.e., $k_i \le 2$, then $G$ is edge  $\lambda$-choosable.
	If $n$ is an even integer and $\lambda=\{k_1, k_2, \ldots, k_q\}$ is a partition of $n-1$ in which each part has size at most $3$, i.e., $k_i \le 3$, then $K_n$ is edge $\lambda$-choosable.
\end{theorem}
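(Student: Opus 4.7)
The plan is to fix a proper edge $k$-colouring of $G$, which decomposes $E(G)$ into matchings $M_1, M_2, \ldots, M_k$, and then to partition these matchings into blocks $B_1, \ldots, B_q$ with $|B_i| = k_i$. Given a $\lambda$-assignment $L$ of the line graph of $G$ with colour groups $C_1, \ldots, C_q$, I will colour every edge in the block $B_i$ using a colour from $L(e) \cap C_i$. Because the groups $C_i$ are pairwise disjoint, edges in different blocks cannot conflict in choice of colour, so it suffices to produce a proper edge $L$-colouring block by block, using only colours from the corresponding group.

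For a block with $k_i = 1$, the single matching is coloured by giving each edge its unique available colour in $L(e) \cap C_i$, which is automatically proper since no two edges of a matching share a vertex. For $k_i = 2$, the block $B_i$ is the union of two matchings, so the subgraph $H \subseteq G$ spanned by these edges has maximum degree at most $2$; each component of $H$ is a path or a cycle, and every cycle alternates between the two matchings and hence has even length. The line graph of $H$, viewed as the conflict graph of the edges in $E(H)$, is again a disjoint union of paths and even cycles, and such graphs are $2$-choosable by the classical Erd\H{o}s--Rubin--Taylor characterization of $2$-choosable graphs. Since $|L(e) \cap C_i| = 2$ for each $e \in E(H)$, this produces a proper edge $L$-colouring of $E(H)$ from $C_i$ and establishes the first statement.

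For the second statement, the extra ingredient is that $n$ even implies $K_n$ decomposes into $n-1$ \emph{perfect} matchings, and I partition these into blocks of sizes $k_1, \ldots, k_q$ as before. Blocks of size $1$ or $2$ are handled exactly as above. For a block with $k_i = 3$, the union $H$ of the three perfect matchings is a cubic spanning subgraph of $K_n$ whose three matchings already form a proper $3$-edge-colouring, so $H$ is Class~$1$. The List Colouring Conjecture is known to hold for cubic Class~$1$ graphs, hence $H$ is edge $3$-choosable, and so $E(H)$ admits a proper edge $L$-colouring from the size-$3$ lists $L(e) \cap C_i$.

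The main technical obstacle is the $k_i = 3$ case, which genuinely requires a nontrivial list edge colouring result for cubic graphs and cannot be handled by the elementary path/even-cycle argument that dispatches $k_i \le 2$. The restriction $k_i \le 3$ is precisely where current knowledge stops: extending the theorem to allow $k_i = 4$ would require the (still open) List Colouring Conjecture for $4$-regular Class~$1$ subgraphs of $K_n$. The sizes $1$ and $2$ otherwise contribute no difficulty beyond choosing the matching decomposition and grouping.
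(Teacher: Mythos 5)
Your handling of the first statement and of the parts of size $1$ and $2$ is correct and is essentially the paper's argument: group the colour classes of a proper edge $k$-colouring into consecutive blocks matching the parts of $\lambda$, colour each block from its own colour group, and use that a union of two matchings is a disjoint union of paths and even cycles, hence $2$-edge-choosable.

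The $k_i=3$ case, however, contains a genuine gap. You assert that ``the List Colouring Conjecture is known to hold for cubic Class 1 graphs'' and apply it to the union $H$ of an arbitrary triple of perfect matchings of $K_n$. That assertion is not a theorem: the LCC for $3$-edge-colourable cubic graphs is open in general, and the cases actually known are bipartite multigraphs (Galvin) and $d$-regular $d$-edge-colourable \emph{planar} multigraphs (Ellingham and Goddyn). An arbitrary union of three perfect matchings of $K_n$ is just an arbitrary spanning $1$-factorable cubic subgraph, which can be non-planar and non-bipartite (e.g.\ a generalized Petersen graph other than $GP(5,2)$), and for such graphs $3$-edge-choosability is not established. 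This is precisely where the paper does extra work: it takes the classical round-robin $1$-factorization $M_1,\dots,M_{n-1}$ of $K_n$ (one vertex at the centre, the others on a circle, matchings obtained by rotation) and observes that the union of any three \emph{consecutive} matchings $M_{i+1}\cup M_{i+2}\cup M_{i+3}$ is a \emph{planar} cubic graph that is $3$-edge-colourable by construction, so the planar case of the LCC applies. To repair your proof you must therefore choose both the $1$-factorization and the grouping carefully (consecutive rotated matchings) and replace the appeal to the general cubic LCC by the planar result. This also corrects your closing remark: the obstruction at $k_i=4$ is not merely that the LCC is open for $4$-regular graphs, but that four consecutive rotated matchings no longer form a planar graph, so even the known planar case is unavailable.
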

\begin{proof}
		Assume  $G$ is an edge $k$-colourable  graph, $f$ is 
		a   $k$-colouring of the line graph of $G$,
		and $\lambda=\{k_1, k_2, \ldots, k_q\}$ is a partition of $k$ in which each part has size at most $2$. 
		Let $L$ be a $\lambda$-assignment of the line graph of $G$,
		and let $ \bigcup_{i=1}^q C_i$ be a partition of $\bigcup_{e \in E(G)}L(e)$ such that for each index $i$, for any edge $e$ of $G$,   $|L(e) \cap C_i| =k_i$.  Let $s_0=0$ and $s_i = s_{i-1}+k_i$. For $i=1,2,\ldots, q$, let $E_i = \bigcup_{j=s_{i-1}+1}^{s_i}f^{-1}(j)$. So either $k_i=1$ and hence $E_i$ is a matching and hence edge $1$-choosable, or $k_i=2$ and $E_i$ induces a subgraph of $G$ whose components are even cycles and paths and hence  is $2$-edge choosable.  
		Thus we can colour all the edges   $e \in E_i$ using colours from $L(e) \cap C_i$, and obtain an $L$-colouring of $E(G)$. 
		
	  Next we assume that $n$ is even,  $\lambda = \{k_1,k_2,\ldots, k_q\}$ is a partition of $n-1$ and each $k_i \le 3$, and $L$ is a $\lambda$-assignment of $L(K_n)$. 
	Similarly, assume  $\bigcup_{e \in E(G)}L(e) = \bigcup_{i=1}^q C_i$, where for each index $i$, for any edge $e$ of $K_n$,   $|L(e) \cap C_i| =k_i$. We shall construct an $L$-colouring of $E(K_n)$.  
	 
	 A classical method of constructing an  $(n-1)$-colouring of the line graphs of $K_n$ is as follows: Order the vertices of $K_{n-1}$ in a circle $v_1, v_2, \ldots, v_{n-1}$. Add one vertex $v_0$ in the center of the circle. Construct a matching $M_1$ as $v_0v_1, v_{n-1}v_2, v_{n-2}v_3, \ldots, v_{n-i}v_{i+1}, \ldots, v_{n/2+1}v_{n/2}$ (see the solid edges in Figure \ref{fig3} (a)).

\begin{figure}[h]
	\begin{center}
		\includegraphics[scale=0.6]{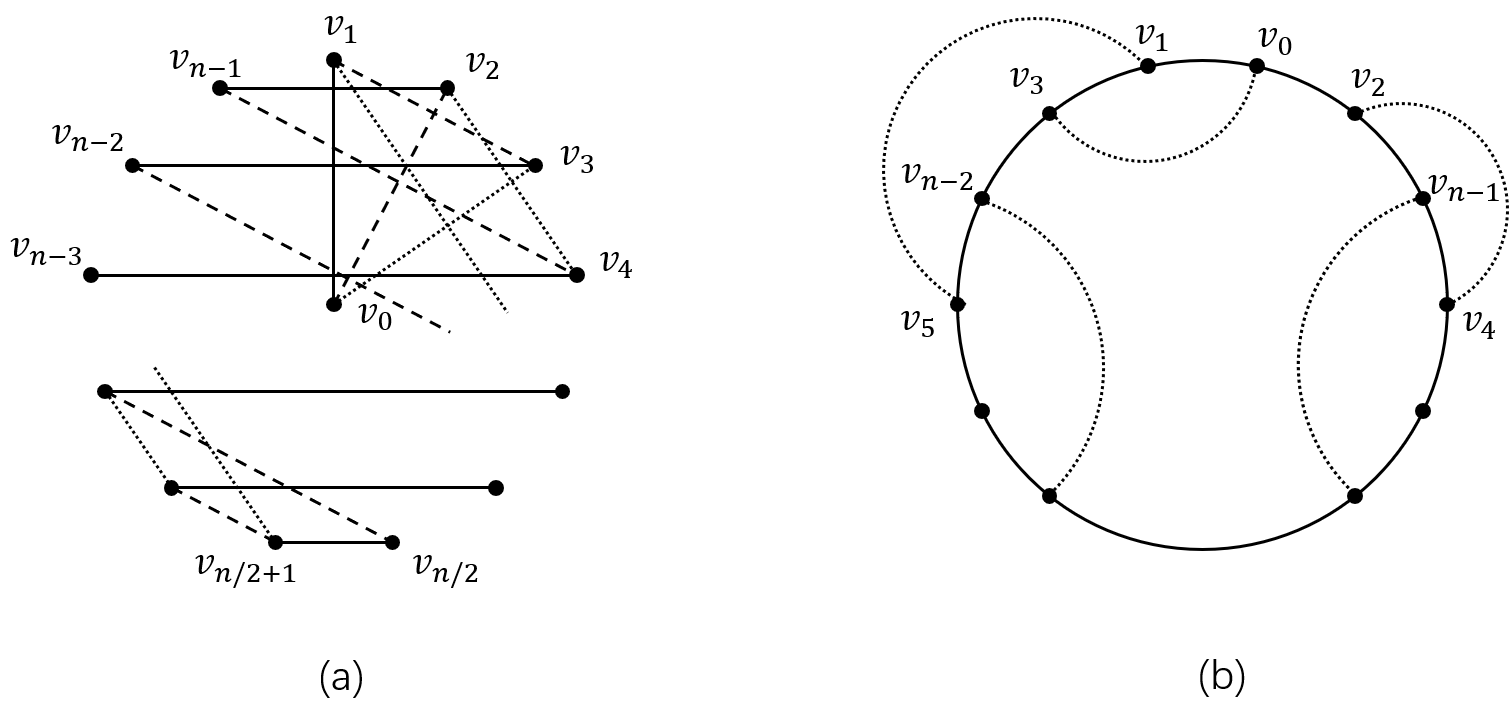} 
	\end{center}
	\caption{ (a) The matching $M_1 \cup M_2 \cup M_3$, (b) Embedding of $M_1 \cup M_2 \cup M_3$ in the plane.}
	\label{fig3}
\end{figure}

	  Rotate the matching clockwise for  $2\pi/(n-1)$ to get matching $M_2$, which consists of edges $v_0v_2, v_1v_3, v_{n-1}v_4, \ldots, v_{n-i}v_{i+3}, \ldots, v_{n/2+2}v_{n/2+1}$. 
	  Rotate another    $2\pi/(n-1)$ to get matching $M_3$, and matching $M_4$, $\ldots$, $M_{n-1}$. With each matching be a colour class, we obtain an $(n-1)$-colouring of the line graph of $K_n$. 
	 
	 The union $M_1 \cup M_2  $ is a Hamilton cycle and the union $M_1 \cup M_2 \cup M_3$ is a cubic planar graph  as shown in Figure \ref{fig3} (b).

	 It is well-known \cite{Dougbook} that $3$-edge colourable cubic plane graph is $3$-edge choosable. Thus the spanning subgraph of $K_n$ with edge set $M_1 \cup M_2 \cup M_3$ is $3$-edge choosable, as well as  the spanning subgraph of $K_n$ with edge set  $M_{i+1} \cup M_{i+2} \cup M_{i+3}$ for any index $i$,   is $3$-edge choosable.
	 
	   Let $s_0=0$ and $s_i = s_{i-1}+k_i$. By the argument above, the spanning subgraph of $K_n$ with edge set $E_i = \bigcup_{j=s_{i-1}+1}^{s_i}M_j$ is $k_i$-edge choosable. 
	 Thus we can colour all the edges   $e \in E_i$ using colours from $L(e) \cap C_i$, and obtain an $L$-colouring of $E(K_n)$. 
\end{proof}

It would be interesting to generalize Theorem \ref{thm-line} to all line graphs.

\section{Signed graph colouring and $\lambda$-choosability}
\label{sec-relation}

 A signed graph is a pair $(G, \sigma)$, where $G$ is a graph and $\sigma: E \to \{-1,+1\}$ is a   signature.
 For a positive integer $k$, let $Z_k$ be the cyclic group of order $k$, 
 and   $N_k=\{1, -1, 2, -2, \ldots, q, -q\}$ if 
 $k=2q$ is even and $N_k = \{0, 1, -1, 2, -2, \ldots, q, -q\}$ if $k=2q+1$ is odd. A    $k$-colouring  of $(G, \sigma)$ is 
a mapping $f: V(G) \to N_k$ such that for each edge $e=xy$, $f(x) \ne \sigma(e)f(y)$, and 
 a    $Z_k$-colouring  of $(G, \sigma)$ is a mapping $f: V(G) \to Z_k$ such that for each edge $e=xy$, $f(x) \ne \sigma(e)f(y)$. 
A graph $G$ is   signed $k$-colourable  (respectively,   signed  $Z_k$-colourable) if for any signature $\sigma$ of $G$, the signed graph $(G,\sigma)$ is $k$-colourable (respectively,  $Z_k$-colourable).

\begin{theorem}
	\label{thm-implies}
	Every signed $4$-colourable graph is weakly $4$-choosable.
\end{theorem}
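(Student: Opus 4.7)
The plan is to transport the symmetric list-coloring problem into a signed-coloring problem. Write each $L(v)=\{\pm a_v,\pm b_v\}$ with $0<a_v<b_v$. I plan to construct a signature $\sigma\colon E(G)\to\{\pm 1\}$ and a bijection $\phi_v\colon N_4\to L(v)$ for each vertex $v$, so that any signed $4$-coloring $g$ of $(G,\sigma)$ (supplied by the hypothesis) gives a proper $L$-coloring via $f(v):=\phi_v(g(v))$.

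The compatibility to enforce is this: an $L$-conflict $f(u)=f(w)$ arises exactly when $\phi_u(g(u))=\phi_w(g(w))\in L(u)\cap L(w)$, and the signed constraint rules out precisely the pairs of the form $(g(u),g(w))=(x,\sigma(uw)x)$. So for every edge $uw$ and every positive $c\in L(u)\cap L(w)$ I need
\[
(\phi_u^{-1}(c),\phi_u^{-1}(-c))=\sigma(uw)\cdot(\phi_w^{-1}(c),\phi_w^{-1}(-c))
\]
componentwise. Viewing each $\phi_v$ as an ordered partition of $N_4$ into two ordered pairs (one for $a_v$, one for $b_v$), and allowing the full $24$ bijections per vertex (not only the $8$ sign-respecting ones), I would cast this as a constraint-satisfaction problem on an auxiliary ``track graph'' $G^{*}$ with vertex set $\{(v,c):v\in V(G),\,c\in\{a_v,b_v\}\}$ and edges $(u,c)$--$(w,c)$ for each $uw\in E(G)$ and each common positive $c$. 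Each $G^{*}$-edge asks that the two endpoint pairs be equal or componentwise-negated, recording the choice as $\sigma(uw)$, with the extra requirement that the two induced $G^{*}$-edges at a $G$-edge with $L(u)=L(w)$ share the same $\sigma$.

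The main step is to show that this CSP is always globally solvable. I would proceed by fixing a pair at one $G^{*}$-vertex of each connected component, propagating along a spanning tree while setting $\sigma$, and then verifying that non-tree edges create no contradictions by exploiting the remaining freedom at each vertex. Once a consistent $(\sigma,\{\phi_v\})$ is in place, invoking the signed $4$-colourability of $G$ produces $g$, and $f=\phi\circ g$ is the desired $L$-coloring.

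The main obstacle is proving the global consistency of the CSP on $G^{*}$. Restricting to sign-respecting bijections already fails on small odd-cycle configurations such as the triangle with lists $\{\pm 1,\pm 2\},\{\pm 2,\pm 3\},\{\pm 3,\pm 1\}$, so non-sign-respecting bijections are indispensable; the triangle example can in fact be solved by a clever non-sign-respecting choice. The delicate part of the argument is reconciling this extra $24$-fold freedom with the coupled two-common-value constraint when $L(u)=L(w)$, and showing that orbit alignment can be propagated consistently around every cycle of $G^{*}$.
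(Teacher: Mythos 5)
Your setup is the right one: you correctly reduce the statement to finding, for each vertex $v$, a bijection $\phi_v: N_4 \to L(v)$ and, for each edge $uv$, a sign $\sigma(uv)$ such that every $L$-conflict is forbidden by the signed constraint; and your observation that sign-respecting bijections do not suffice is consistent with the actual solution. But the proof has a genuine gap: the entire content of the theorem is the ``global solvability of the CSP,'' and you explicitly leave that step open. Worse, the strategy you sketch --- fix $\phi$ at a root, propagate along a spanning tree, then ``verify that non-tree edges create no contradictions by exploiting the remaining freedom at each vertex'' --- cannot work as stated: once the tree propagation is done, every $\phi_v$ is already pinned down, so there is no remaining freedom at the vertices, and for a non-tree edge $uw$ the condition $|\phi_u^{-1}(c)| = |\phi_w^{-1}(c)|$ for every common value $c$ (with a single sign relating them) is a nontrivial constraint that your argument gives no reason to hold.

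The missing idea is that no propagation is needed at all, because there is a \emph{canonical local} choice that always works. Writing $L^+(v)=\{|i|: i\in L(v)\}$, take $\phi_v$ to send $2\mapsto \max L^+(v)$, $1\mapsto -\max L^+(v)$, $-2\mapsto \min L^+(v)$, $-1\mapsto -\min L^+(v)$; i.e., group $\{1,2\}$ onto the $\pm\max$ pair and $\{-1,-2\}$ onto the $\pm\min$ pair (this is exactly a non-sign-respecting bijection of the kind you anticipated). Then for any edge $uv$, a common absolute value $c$ of the two lists is either $\min$--$\min$ or $\max$--$\max$ (forcing $\sigma(uv)=+1$) or it is $\min$ of one and $\max$ of the other (forcing $\sigma(uv)=-1$), and a short check shows these two cases can never both occur on the same edge since $\min < \max$ at each vertex. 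So $\sigma(uv)$ is determined locally by $L(u)$ and $L(v)$ alone ($-1$ exactly when $\min L^+$ of one endpoint equals $\max L^+$ of the other), all conflicts are forbidden, and there are no cycle conditions to verify. With that in hand your transport of a signed $4$-colouring to an $L$-colouring goes through; this is precisely the paper's proof.
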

\begin{proof}
	Assume $G$ is signed $4$-colourable and $L$ is a  symmetric $4$-assignment of $G$. Let $L^+(u) = \{|i|: i \in L(u)\}$ for each vertex $u$.

	We define a signature $\sigma$ of $G$ as follows: 
	For $e=uv \in E(G)$,   
	\[
	\sigma(e) =	\begin{cases}
	-1, & \text{ if $\min L^+(u) = \max L^+(v)$ or $\min L^+(v) = \max L^+(u)$}, \cr
	1, &\text{ otherwise.}
	\end{cases}
	\]
	
	By our assumption, $G$ is signed $4$-colourable. Let $f: V(G) \to \{\pm 1, \pm 2\}$ be a $4$-colouring of $(G, \sigma)$. We define an $L$-colouring $\phi$ of $G$ as follows:
	
	For $v \in V(G)$, let 
	\[
	\phi(v) =	\begin{cases}
	\max L^+(v), & \text{ if $f(v)=2$}, \cr
	- \max L^+(v), & \text{ if $f(v)=1$}, \cr
	\min L^+(v), &\text{ if $f(v) = -2$}, \cr
	- \min L^+(v), &\text{ if $f(v) = -1$}.
	\end{cases}
	\]
	
	Now we show that $\phi$ is a proper colouring of $G$. 
		Assume to the contrary that $e=uv$ is an edge of $G$, and $\phi(u) = \phi(v)$.  Let $i = |\phi(u)|$.
	
	Assume $e=uv$ is a positive edge. Then either $i = \min L^+(u)  = \min L^+(v)$ or $i = \max L^+(u)  = \max L^+(v)$. In any  case, 
	$f(u)  f(v) > 0$. Since $e$ is a positive edge, we have $f(u) \ne f(v)$. It follows from the definition that $\phi(u)  \phi(v) < 0$, hence $\phi(u) \ne \phi(v)$, a contradiction.
	
	Assume $e=uv$ is a negative edge. Then either $i = \min L^+(u)  = \max L^+(v)$ or $i = \max L^+(u)  = \min L^+(v)$. In any case, 
	$f(u)f(v) < 0$.     Since $e$ is a negative edge, we have $f(u) \ne -f(v)$. Hence  $|f(u)| \ne |f(v)|$, which implies that $\phi(u)  \phi(v) < 0$, hence $\phi(u) \ne \phi(v)$, a contradiction.
\end{proof}

The converse of Theorem \ref{thm-implies} is not true. The graph $K_{2,2,2,2}$ is $4$-choosable (and hence weakly $4$-choosable), but it is not signed $4$-colourable \cite{KKZ2018}.

It was conjectured by
M\'{a}\v{c}ajov\'{a},   Raspaud and \v{S}koviera \cite{MRS} that every planar graph is signed $4$-colourable, and conjectured by   K\"{u}ndgen and Ramamurthi \cite{KR2002} that every planar graph is weakly $4$-choosable. Theorem \ref{thm-implies} shows that M\'{a}\v{c}ajov\'{a},   Raspaud and \v{S}koviera's conjecture implies 
 K\"{u}ndgen and Ramamurthi's conjecture. However, very recently, Kardo\v{s} and Narboni \cite{KN} constructed a 
 planar graph which is not signed 4-colourable. The conjecture of K\"{u}ndgen and Ramamurthi remains open.

\begin{theorem}
	\label{thm-ks-112}
	Every signed $Z_4$-colourable graph is $\{1,1,2\}$-choosable.
\end{theorem}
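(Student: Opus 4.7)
The plan is to translate a $Z_4$-colouring of a suitably signed $(G,\sigma)$ into an $L$-colouring, exploiting the fact that in $Z_4$ the elements $0$ and $2$ are fixed by negation while $1$ and $3$ are swapped. This dichotomy matches the structure of a $\{1,1,2\}$-assignment: two colours are globally fixed across vertices (to be encoded by the self-negating $0,2$) and the other two form a vertex-dependent pair (to be encoded by the swappable $1,3$).

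First I would apply Lemma \ref{lem0} to reduce to a \textbf{special} $\{1,1,2\}$-assignment, so that $L(v)=\{c_1,c_2,a_v,b_v\}$, where $\{c_1\},\{c_2\}$ are the two singleton colour groups and $\{a_v,b_v\}$ is an arbitrarily ordered pair from $C_3$. I then define a signature on $G$ by
\[
\sigma(uv) = \begin{cases} -1, & \text{if } a_u = b_v \text{ or } b_u = a_v, \\ +1, & \text{otherwise.} \end{cases}
\]
The crucial consistency check is that the ``$-1$-forcing'' equalities $\{a_u=b_v,\,b_u=a_v\}$ and the ``$+1$-forcing'' equalities $\{a_u=a_v,\,b_u=b_v\}$ never appear together on a single edge: combining any one member of the first group with any one of the second immediately forces $a_u=b_u$ or $a_v=b_v$, contradicting the distinctness of the labelled pair at an endpoint.

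By hypothesis $(G,\sigma)$ admits a $Z_4$-colouring $f:V(G)\to Z_4$. Define $\phi(v)$ to be $c_1,c_2,a_v,b_v$ when $f(v)$ equals $0,2,1,3$ respectively. To verify $\phi$ is a proper $L$-colouring, suppose $\phi(u)=\phi(v)$ on some edge $uv$. Then $(f(u),f(v))$ lies in $\{(0,0),(2,2),(1,1),(3,3),(1,3),(3,1)\}$. The first two pairs give $f(u)=\sigma(uv)f(v)$ for any $\sigma(uv)$, since $-0\equiv 0$ and $-2\equiv 2$ in $Z_4$. Each of the remaining four pairs forces one of the four labelled equalities above, which by construction fixes $\sigma(uv)$ to the sign that makes $f(u)=\sigma(uv)f(v)$ in $Z_4$ (e.g., $(f(u),f(v))=(1,1)$ gives $a_u=a_v$, forcing $\sigma(uv)=+1$, whence $f(u)=1=\sigma(uv)f(v)$). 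In every case the $Z_4$-colouring constraint $f(u)\neq\sigma(uv)f(v)$ is violated, a contradiction.

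There is no serious obstacle beyond the short combinatorial check that the two families of forcing equalities are mutually exclusive on each edge, so that $\sigma$ is well-defined; everything else follows from a direct unfolding of the definitions, relying on the parallel between the involution $x\mapsto -x$ on $Z_4$ and the fixed-vs.-swappable structure of a $\{1,1,2\}$-assignment.
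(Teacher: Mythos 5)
Your proof is correct and takes essentially the same route as the paper's: reduce to a special $\{1,1,2\}$-assignment via Lemma \ref{lem0}, encode the crossed/uncrossed pattern of the two non-common colours on each edge as a signature, and translate a $Z_4$-colouring back by sending $0,2$ to the common colours and $1,3$ to the vertex-dependent pair. The only cosmetic difference is that the paper orders each pair canonically as $\min L'(v)$ and $\max L'(v)$, which renders your explicit well-definedness check for $\sigma$ automatic.
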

\begin{proof}
	Assume $G$ is a signed $Z_4$-colourable graph and   $L$ is a $\{1,1,2\}$-assignment of   $G$. We may assume that colours in the lists are positive integers. By Lemma \ref{lem0}, we may assume that $\{ 1,2\} \subseteq \cap_{v \in V(G)} L(v)$.  For each vertex $v$, let $L'(v) = L(v) -\{1,2\}$.

		We define a signature $\sigma$ of $G$ as follows: 
		For $e=uv \in E(G)$,   
		\[
		\sigma(e) =	\begin{cases}
		-1, & \text{ if $\min L'(u) = \max L'(v)$ or $\min L'(v) = \max L'(u)$}, \cr
		1, &\text{ otherwise.}
		\end{cases}
		\]
		
		By our assumption, $G$ is $Z_4$-colourable. Let $f: V(G) \to Z_4$ be a $Z_4$-colouring of $(G, \sigma)$. We define an $L$-colouring $\phi$ of $G$ as follows:
		
		For $v \in V(G)$, let 
		\[
		\phi(v) =	\begin{cases}
		\max L'(v), & \text{ if $f(v)=3$}, \cr
		\min L'(v), &\text{ if $f(v) = 1$}, \cr
		1,   &\text{ if $f(v) = 0$}, \cr 
		2,   &\text{ if $f(v) = 2$}. 
		\end{cases}
		\]
		
		It is  obvious that $\phi$ is an $L$-colouring of $G$. 
		Now we show that $\phi$ is a proper colouring of $G$. 
		
		Assume to the contrary that $e=uv$ is an edge of $G$, and $\phi(u) = \phi(v)=i$.  It is obvious that $i \ne 1,2$, and hence $i \in L'(u) \cap L'(v)$. If $i = \max L'(u) = \max L'(v)$ or $i = \min L'(u) = \min L'(v)$, then $\sigma(e)=1$ and hence $f(u) \ne f(v)$. This is a contradiction, as $\phi(u) = \phi(v) = \max L'(u) = \max L'(v)$ or 
		$\phi(u) = \phi(v) = \min L'(u) = \min L'(v)$ implies that $f(u) = f(v)$.  Assume that $i = \max L'(u) = \min L'(v)$. Then $\sigma(e)=-1$. So $f(u) \ne -f(v)$ in $Z_4$. This is again in
		contrary to the definition of $\phi$.   
\end{proof}

Again the converse of Theorem \ref{thm-ks-112} is not true. It can be verified that $K_{2,2,2,2}$ is also not $Z_4$-colourable.

As mentioned earlier, Kemnitz and Voigt \cite{KV2018} showed that there are planar graphs that are not $\{1,1,2\}$-choosable. Hence by Theorem \ref{thm-ks-112}, there are planar graphs that are not signed $Z_4$-colourable. This refutes a conjecture of Kang and Steffen \cite{KS} which asserts that every planar graph  is signed $Z_4$-colourable.

In the following, we present a direct construction of a signed  planar graph $(G, \sigma)$ which is not $Z_4$-colourable. Indeed, this planar graph $G$ is a slight modification of a graph constructed by Wegner in 1973 \cite{Wegner}, which was used as an example of planar graph whose   vertex set   cannot be partitioned into $V_1 \cup V_2$ such that $G[V_1]$ is bipartite and $G[V_2]$ is a forest. Kemnitz and Voigt's example graph was motivated by Wegner's graph and is more complicated. 

Let $(H, \sigma)$ be the signed graph shown in Figure \ref{fig1}.

\begin{figure}[h]
	\begin{center}
		\includegraphics[scale=0.7]{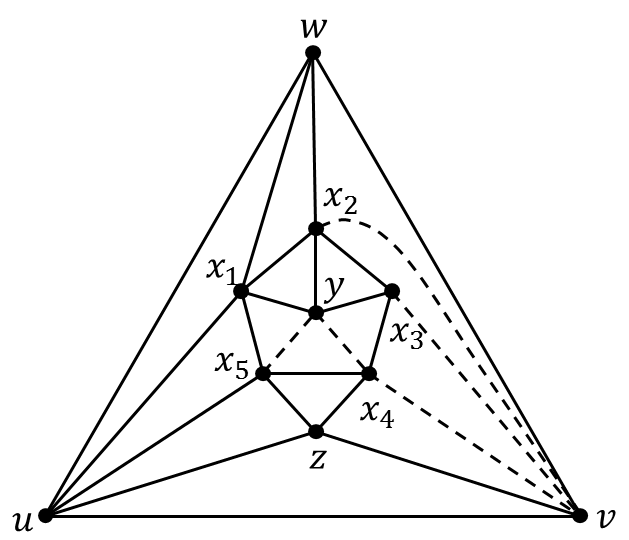} 
	\end{center}
	\caption{The signed graph $(H, \sigma)$, where the dotted lines are negative edges and solid lines are positive edges.}
	\label{fig1}
\end{figure}

\begin{claim}
	\label{clm1}
	For any $Z_4$-colouring $f$ of $(H, \sigma)$, $\{f(u), f(v)\} \cap \{0,2\} \ne \emptyset$. 
\end{claim}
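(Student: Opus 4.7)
The plan is to argue by contradiction. Suppose $f$ is a $Z_4$-colouring of $(H,\sigma)$ with $\{f(u),f(v)\}\cap\{0,2\}=\emptyset$, so $f(u),f(v)\in\{1,3\}$. The first step is to record the key structural fact about $Z_4$-colourings restricted to the ``odd'' elements: if $x,y\in V(H)$ with $f(x),f(y)\in\{1,3\}$, then a positive edge $xy$ forces $f(x)\neq f(y)$, while a negative edge $xy$ forces $f(x)\neq -f(y)$, i.e.\ $f(x)=f(y)$ (because $-1\equiv 3$ and $-3\equiv 1$ in $Z_4$). Equivalently, after the substitution $g(x)=+1$ if $f(x)=1$ and $g(x)=-1$ if $f(x)=3$, the subgraph induced on vertices with $f$-value in $\{1,3\}$ must be balanced with respect to the signature $-\sigma$ (positive edges demand opposite $g$-values, negative edges demand equal $g$-values).

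Next I would use the symmetry $t\mapsto -t$ of $Z_4$, which preserves signed-colourings, to reduce to two cases: (a) $f(u)=f(v)=1$, and (b) $f(u)=1$, $f(v)=3$. In each case I would propagate the constraints from $u$ and $v$ along the edges of $H$ drawn in Figure~\ref{fig1}. For each internal neighbour $w$ of $u$ or $v$, the list of available $Z_4$-colours is cut down by the sign of the corresponding edge; whenever a propagation forces $f(w)\in\{1,3\}$, the balancedness observation of the previous paragraph chains the constraint further to the next neighbour. Whenever a propagation forces $f(w)\in\{0,2\}$, I would use the constraints on the remaining incident edges of $w$ to tie that value down as well. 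The gadget $H$ is designed precisely so that in both cases (a) and (b) some short signed cycle through $u$ and $v$ accumulates a product of signs and colour-differences that collapses to an equation $0\equiv 2$ (or $1\equiv 3$) in $Z_4$, yielding the desired contradiction.

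The main obstacle is that the argument is inherently figure-specific: once the above framework is in place, verifying the claim reduces to a finite case check on the internal vertices of $H$, and the delicate point is making sure that every branch of the case tree is closed off by the chosen signature. I expect the cleanest presentation to fix a short list of ``anchor'' vertices adjacent to both $u$ and $v$, show that their colours are completely determined (up to the $Z_4$-symmetry) in each of cases (a) and (b), and then exhibit one specific edge or triangle whose signed-colouring constraint is violated by that forced assignment. This will complete the proof that every $Z_4$-colouring of $(H,\sigma)$ must place at least one of $u,v$ into $\{0,2\}$.
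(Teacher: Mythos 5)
Your overall strategy coincides with the paper's: argue by contradiction, use the automorphism $t\mapsto -t$ of $Z_4$ to normalize the colours of $u$ and $v$, and then propagate forced colours through the internal vertices of $H$ until some triangle or edge constraint is violated. (One small difference: since $uv$ is the base edge of $H$ and is positive, the case $f(u)=f(v)$ cannot occur, so the paper reduces immediately to the single case $f(u)=1$, $f(v)=3$; your case (a) is vacuous.) Your preliminary observation about odd-valued vertices --- positive edges force distinct odd values, negative edges force equal odd values --- is correct and is implicitly what drives the paper's deductions.

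However, there is a genuine gap: you never actually perform the case check, and for a claim about one specific signed graph the case check \emph{is} the proof. Saying that ``the gadget $H$ is designed precisely so that \ldots some short signed cycle \ldots collapses to an equation $0\equiv 2$'' asserts the conclusion rather than deriving it; nothing in your write-up verifies that the particular signature $\sigma$ of Figure~\ref{fig1} actually closes every branch. The substance of the paper's argument is the explicit propagation: from $f(u)=1$, $f(v)=3$ one gets $f(w)\in\{0,2\}$ (normalized to $0$), then the negative edge $vx_4$ and the all-positive triangle $zx_4x_5$ force $f(x_4)=3$ or $f(x_5)=3$, and each of these two subcases is chased through $x_1,x_2,y,x_3$ to a vertex with no legal colour. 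Without carrying out (or at least exhibiting) this chain of forced values for the actual edge signs of $H$, the claim remains unproved; your text is a plan for a proof rather than a proof.
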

\begin{proof}
	Assume to the contrary that $f$ is a $Z_4$-colouring of $(H, \sigma)$ and 
	$\{f(u), f(v)\} \cap \{0,2\} = \emptyset$.  By symmetry in colours, we may assume that $f(u)=1, f(v)=3$. 
	Then $f(w) \notin \{1,3\}$. By symmetry in colours again, we may assume that $f(w) = 0$. As the edge $vx_4$ is negative, $f(x_4) \ne -3=1$. Thus
	$f(z) \in \{0,2\}$ and $ f(x_4), f(x_5) \in \{0,2,3\}$. Since $z, x_4, x_5$ form a triangle with all edges positive, 
	$z, x_4, x_5$ are coloured by distinct colours. Hence either $f(x_4)=3$ or $f(x_5)=3$. 
	
	\bigskip
	\noindent
	{\bf Case 1} $f(x_5)=3$. 
	
	Now $x_1$ is adjacent to vertices of colours $0,1,3$ by positive edges, so $f(x_1)=2$.
	Then $x_2$ is adjacent to vertices of colours $0,2$ by positive edges, and to a vertex of colour $3$ by a negative edge. Hence $f(x_2) \ne 0,2,1$ and so $f(x_2)=3$. Similarly, these forces $f(y)=0$, which in turn forces $f(x_3)=2$. Then there is no legal colour for $x_4$, a contradiction.

	\bigskip
	\noindent
	{\bf Case 2} $f(x_4)=3$. 
	
	Then $f(x_3) \ne 3,1$. Hence $f(x_3)=0$ or $2$. If $f(x_3)=2$, then this forces $f(x_2)=3$, 
	which in turn forces $f(x_1)=2$ and $f(y)=0$. But then there is no legal colour for $x_5$, a contradiction. If $f(x_3)=0$, then we must have $\{f(x_1), f(x_2)\} = \{2,3\}$,  which leaves no legal colour for $y$, a contradiction.

	This completes the proof of the claim.
\end{proof}

We call the edge $uv$ of $H$ the {\em base edge} of $H$ and $w$ the {\em top vertex} of $H$. 
Let $(G, \sigma)$ be the signed planar graph as depicted in Figure \ref{fig2}, where in each of the six triangular faces, a copy of $(H, \sigma)$ is embedded. For each copy of $(H, \sigma)$, the three vertices on the boundary of $H$ are identified with the three vertices on the boundary of the corresponding face, and the top vertex $w$ is identified with the vertex  pointed by an arrow  inside the face. Note that the four vertices $u_1,u_2,u_3,u_4$ induce  a copy of $K_4$, and each of the six edges of this copy of $K_4$ is identified with the base edge of a copy of $(H, \sigma)$. 

\begin{figure}[h]
	\begin{center}
		\includegraphics[scale=0.7]{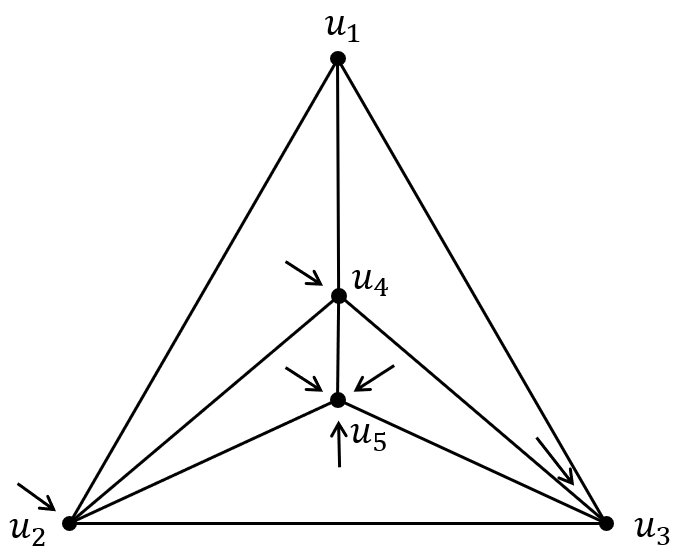} 
	\end{center}
	\caption{The signed graph $(G, \sigma)$, where all the edges shown in the figure are positive edges.}
	\label{fig2}
\end{figure}

\begin{theorem}
	\label{main}
	The signed planar graph $(G, \sigma)$ is not $Z_4$-colourable.
\end{theorem}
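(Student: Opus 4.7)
The plan is to deduce Theorem~\ref{main} almost directly from Claim~\ref{clm1} by exploiting the $K_4$ sitting on the vertices $u_1,u_2,u_3,u_4$. Suppose, for contradiction, that $f : V(G) \to Z_4$ is a $Z_4$-colouring of $(G,\sigma)$. Since the six edges of the $K_4$ induced by $\{u_1,u_2,u_3,u_4\}$ are all positive, $f$ restricted to these four vertices is a proper colouring of $K_4$, so the four values $f(u_1), f(u_2), f(u_3), f(u_4)$ are pairwise distinct and therefore exhaust $Z_4 = \{0,1,2,3\}$.

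Next I would invoke the structure of $G$: each of the six edges $u_iu_j$ of this $K_4$ is the base edge of a copy of $(H,\sigma)$, embedded in the triangular face determined by $u_i$, $u_j$ and the corresponding top vertex. The restriction of $f$ to each such embedded copy of $H$ is itself a $Z_4$-colouring of $(H,\sigma)$. Hence, by Claim~\ref{clm1} applied inside that copy, for every pair $i \ne j$ we must have
\[
\{f(u_i), f(u_j)\} \cap \{0,2\} \ne \emptyset.
\]

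To finish, I would look at the edge whose endpoints receive the two odd colours. Since $\{f(u_1),\dots,f(u_4)\} = \{0,1,2,3\}$, there exist indices $i \ne j$ with $f(u_i)=1$ and $f(u_j)=3$. For that particular edge we get $\{f(u_i),f(u_j)\} = \{1,3\}$, which is disjoint from $\{0,2\}$, contradicting the consequence of Claim~\ref{clm1} derived above. This contradiction shows that no $Z_4$-colouring of $(G,\sigma)$ can exist. I do not anticipate any serious obstacle here: all the work is concentrated in Claim~\ref{clm1} (the gadget analysis of $H$), and once that is granted the $K_4$ pigeonhole argument collapses in a few lines. The only point to be careful about is verifying that each base edge of $K_4$ really is the base edge of some embedded copy of $H$ with signature consistent with the hypothesis of Claim~\ref{clm1}, which is built into the construction of $(G,\sigma)$ from Figure~\ref{fig2}.
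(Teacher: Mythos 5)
Your proposal is correct and follows the paper's own argument essentially verbatim: apply Claim~\ref{clm1} to each of the six embedded copies of $(H,\sigma)$ to force every edge of the positive $K_4$ on $u_1,u_2,u_3,u_4$ to have an endpoint coloured $0$ or $2$, then derive a contradiction from the fact that a proper colouring of this $K_4$ must use all four colours, leaving the edge between the vertices coloured $1$ and $3$ uncovered. The only difference is that you spell out this final pigeonhole step a bit more explicitly than the paper does.
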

\begin{proof}
	Assume to the contrary that $f$ is a $Z_4$-colouring of $(G, \sigma)$. 
	
	By applying Claim \ref{clm1} to each of the six copies of $(H, \sigma)$, we conclude that  
	each of the six edges of the copy of $K_4$ induced by $u_1,u_2,u_3,u_4$ has an end vertex coloured by $0$ or $2$. On the other hand, all the edges of this copy of $K_4$ are positive, so the four vertices of this copy of $K_4$ are coloured by distinct colours from $Z_4$.	 This is a contradiction.
\end{proof}

Wegner's graph  is obtained from a copy of $K_4$ by adding six copies of $H$ and identifying the base edge  of each copy of $H$ with an edge of $K_4$,  and identifying three copies of the top vertex and a vertex of the $K_4$ (see \cite{Wegner}).  We could have used Wegner's graph instead of the graph $G$ described above. The proof of Theorem \ref{main} works for such a signed graph in the same way. The graph described above is obtained from Wegner's graph by identifying the top vertices of the other three copies of $H$ into a single vertex. This identification is not important, the only purpose is to make the graph a little smaller.

\section{Colouring of generalized signed graphs}
\label{sec-g}

  DP-colouring (also called correspondence colouring) of graphs is a
   concept introduced recently by Dvo\v{r}\'{a}k and Postle \cite{DP}, as a variation of list colouring of graphs, and has attracted a lot of attention (see \cite{BK,BKZ}).
   \begin{definition}
   	\label{def-dp} A $k$-cover of a graph $G$ is a graph $H$ with $V(H)=V(G) \times [k]$, in which each edge is of the form $(u,i)(v,j)$ for some edge $uv$ of $G$, and   for each edge $uv$ of $G$, 
   	edges of $H$ between $u\times [k]$ and $v \times [k]$ form a (not necessarily perfect) matching.
   	We say $G$ is DP-k-colourable if  any $k$-cover $H$ of $G$ has an independent set $I$ which intersects $v \times [k]$ exactly once for each vertex $v$ of $G$.  
   \end{definition}
    By using the concept of DP-colouring, 
  Dvo\v{r}\'{a}k and Postle \cite{DP} proved that planar graphs without cycles of lengths $4,5,6,7,8$ are $3$-choosable, solving a 15 year old open problem. DP-colouring can also be viewed as a generalization of colouring of signed graphs. This point of view is adopted in \cite{JWZ2018,JLYZ}, where the concept of generalized signed graphs is introduced. 
  
  In this section, we shall study colouring of generalized signed graphs that are strengthening of  $\lambda$-choosability of graphs. 
  
  First we define generalized signed graphs and their colouring.

For convenience, we view an undirected graph $G$ as  a symmetric digraph, in which each edge $uv$ of $G$ is replaced by two opposite arcs $e=(u,v)$ and $e^{-1}=(v, u)$. We denote by $E(G)$ the set of arcs of $G$.
A set  $S$   of permutations of 
positive integers is {\em inverse closed} if $\pi^{-1} \in S$ for every
$\pi \in S$.

\begin{definition}
	Assume $S$ is  an inverse closed subset  of permutations of 
	positive integers.
	An {\em $S$-signature} of $G$ is a mapping $\sigma: E(G) \to S$ such that for every arc $e$, $\sigma(e^{-1}) = \sigma(e)^{-1}$.
	The pair $(G, \sigma)$ is called an {\em $S$-signed graph}.
\end{definition}

\begin{definition}
	Assume $S$ is  an inverse closed subset  of permutations of 
	positive integers and $(G, \sigma)$ is an $S$-signed graph.
	A {\em $k$-colouring } of $(G, \sigma)$ is a mapping $f: V(G) \to [k]=\{1,2,\ldots, k\}$ such that for each 
	arc $e=(x,y)$ of $G$, 
	$\sigma(e)(f(x)) \ne f(y)$.
	We say $G$ is $S$-$k$-colourable if $(G, \sigma)$ is  $k$-colourable for every $S$-signature $\sigma$ of $G$.
\end{definition}

Colouring of generalized signed graphs is a common generalization of many colouring concepts. 
\begin{itemize}
	\item If  $S=\{id\}$, then $S$-$k$-colourable is equivalent to $k$-colourable.  
	\item  If $S=\{id, (12)(34)\ldots ((2q-1)(2q))\}$ when $q = \lfloor k/2 \rfloor$ or $q = \lceil k/2 \rceil -1$, then   $S$-$k$-colourable is equivalent to signed  $k$-colourable or signed  $Z_k$-colourable, respectively.
	\item If $S = <(12\ldots k)>$ is the cyclic group generated by permutation $(12\ldots k)$, then $S$-$k$-colourable is the same as $Z_k$-colourable, as defined by   
	Jaeger, Linial, Payan and Tarsi \cite{JLPT1992}. Indeed for each group $\Gamma$ of order $k$, there is a subgroup $S$ of the symmetric group $S_k$ such that $\Gamma$-colourable is equivalent to  $S$-$k$-colourable. 
	\item If $S $ is the set of all permutations, then   $S$-$k$-colourable is equivalent to DP-$k$-colourable.
\end{itemize}

It is shown in \cite{DP} that   every DP-$k$-colourable graph is $k$-choosable.   Indeed, assume  $L$ is a $k$-assignment of $G$. We define a signature $\sigma$ of $G$ as follows: For each edge $e=(x,y)$, let $\sigma(e)$ be any permutation of integers for which $\sigma(e)(i)=j$ if the $i$th colour in $L(x)$ equals the $j$th colour in $L(y)$. If the $i$th colour in $L(x)$ is not contained in $L(y)$, then $\sigma(i) \notin [k]$.  
Here we assume the colour set is ordered. Then $(G, \sigma)$ is $k$-colourable if and only if $G$ is $L$-colourable:
If $f$ is a $k$-colouring of $(G, \sigma)$, then let $\phi(x)$ be the $f(x)$th colour in $L(x)$. It is easy to verify that $\phi$ is an $L$-colouring of $G$. Conversely, if $\phi$ is an $L$-colouring of $G$, then let $f(x)=i$ if $\phi(x)$ is the $i$th colour in $L(x)$. It is easy to verify that $f$ is a $k$-colouring of $(G, \sigma)$.

In a very similar manner, the concept of $\lambda$-choosability is closely related  to  colouring of certain generalized signed graphs.

	Assume $\lambda = \{k_1,k_2,\ldots,k_q\}$ is a partition of $k$.
	Let $  I_1 \cup I_2 \cup \ldots \cup I_q$ be a partition of $[k]$, where $I_j= \{s_{j-1}+1,s_{j-1}+2,\ldots, s_{j-1}+k_j\}$ and $s_0=0$ and for $j \ge 1$, $s_j=s_{j-1}+k_j$.    We denote by
	$S_{\lambda}$ to be the set of   permutations $\sigma$ of $[k]$ 
	such that $\sigma(I_j)  =  I_j$ for $1 \le j \le q$.

\begin{theorem}
	\label{thm2}
		Assume $\lambda = \{k_1,k_2,\ldots,k_q\}$ is a partition of $k$. 
		If $G$ is a $S_{\lambda}$-$k$-colourable graph, then $G$ is $\lambda$-choosable. 
\end{theorem}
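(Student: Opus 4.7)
The plan is to mimic the standard argument showing that DP-$k$-colourability implies $k$-choosability, but to stay inside the smaller group $S_\lambda$ by using the block structure of a $\lambda$-assignment. The key observation is that a $\lambda$-assignment $L$ with colour groups $C_1,\ldots,C_q$ already induces a block structure on each $L(v)$ with block sizes $k_1,\ldots,k_q$, matching exactly the blocks $I_1,\ldots,I_q$ that define $S_\lambda$.

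First, I will fix an arbitrary $\lambda$-assignment $L$ of $G$ with colour groups $C_1,\ldots,C_q$, and for every vertex $v$ and every index $j\in\{1,\ldots,q\}$ fix a bijection $\alpha_{v,j}\colon I_j \to L(v)\cap C_j$ (possible since both sides have size $k_j$). These bijections let me translate a ``position'' in $[k]$ into an actual colour in $L(v)$: given $i\in[k]$, there is a unique $j$ with $i\in I_j$, and I define $\Phi_v(i)=\alpha_{v,j}(i)$.

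Next I build the $S_\lambda$-signature $\sigma$. For an arc $e=(x,y)$ and each block $I_j$, consider the sets
\[
A_j(e) = \{\, i\in I_j : \alpha_{x,j}(i) \in L(y)\cap C_j\,\},\qquad
B_j(e) = \{\, i'\in I_j : \alpha_{y,j}(i') \in L(x)\cap C_j\,\}.
\]
Both have size $|L(x)\cap L(y)\cap C_j|$, and the rule $\alpha_{x,j}(i)=\alpha_{y,j}(i')$ defines a bijection $\tau_{e,j}\colon A_j(e)\to B_j(e)$. I extend $\tau_{e,j}$ arbitrarily (but once and for all, symmetrically for $e$ and $e^{-1}$) to a permutation of $I_j$, and let $\sigma(e)$ be the permutation of $[k]$ whose restriction to each $I_j$ is this extension. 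By construction $\sigma(e)(I_j)=I_j$ for every $j$, so $\sigma(e)\in S_\lambda$; and by choosing the extensions on $A_j(e^{-1})=B_j(e)$ to be the inverse of the extensions on $A_j(e)$, we obtain $\sigma(e^{-1})=\sigma(e)^{-1}$, so $\sigma$ is a genuine $S_\lambda$-signature.

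Finally, since $G$ is $S_\lambda$-$k$-colourable, the signed graph $(G,\sigma)$ admits a $k$-colouring $f\colon V(G)\to[k]$. Define $\phi(v)=\Phi_v(f(v))$. Then $\phi(v)\in L(v)$ for every vertex, so $\phi$ respects the lists. To see that $\phi$ is proper, suppose $e=(x,y)$ is an arc with $\phi(x)=\phi(y)=c$. Then $c$ lies in some colour group $C_j$, so $f(x),f(y)\in I_j$, and $\alpha_{x,j}(f(x))=c=\alpha_{y,j}(f(y))$. By the definition of $\sigma$ this forces $f(x)\in A_j(e)$ and $\sigma(e)(f(x))=\tau_{e,j}(f(x))=f(y)$, contradicting the fact that $f$ is a colouring of $(G,\sigma)$. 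Hence $\phi$ is an $L$-colouring of $G$, and $G$ is $\lambda$-choosable.

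The only step that takes any care is the construction of $\sigma$: one must simultaneously ensure that $\sigma(e)$ lies in $S_\lambda$ (handled by the block-wise definition) and that $\sigma(e^{-1})=\sigma(e)^{-1}$ (handled by choosing the extension of $\tau_{e,j}$ and the extension of $\tau_{e^{-1},j}$ to be mutually inverse). Everything else is bookkeeping, directly parallel to the DP-colouring/list-colouring reduction already sketched in the paper.
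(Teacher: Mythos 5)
Your proposal is correct and follows essentially the same route as the paper's proof: encode the $\lambda$-assignment positionally so that each block $I_j$ of $[k]$ indexes $L(v)\cap C_j$, define the signature by matching positions whose colours coincide and extending arbitrarily within each block, and pull back a $k$-colouring of $(G,\sigma)$ to an $L$-colouring. The only difference is presentational: you make the symmetry requirement $\sigma(e^{-1})=\sigma(e)^{-1}$ explicit, which the paper leaves implicit.
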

\begin{proof}
	Assume $G$ is a $S_{\lambda}$-$k$-colourable graph, and $L$ is a $\lambda$-assignment of $G$. 
	By definition, there is a partition of  $\bigcup_{v \in V(G)} L(v)$ as $ C_1 \cup C_2 \cup \ldots \cup C_q$  such that for each vertex $v$ and for each $1 \le i \le q$, $|L(v) \cap C_i| = k_i$. 
		We may assume the colours are ordered in such a way that if $i < j$ then every colour in $C_i$ is less than every colour in $C_j$, and every integer in $I_i$ is less than every integer in $I_j$. 
 
	Then for any $l \in I_j$ for any vertex $v$ of $G$, the $l$th colour of $L(v)$ belongs to $C_j$.

	We define a signature $\sigma$ of $G$ as follows: For each edge $e=(x,y)$, if $l \in I_j$ and the $l$th colour in $L(x)$  equals the $l'$th colour in $L(y)$,
	let $\sigma(e)(l)=l'$. If the $l$th colour in $L(x)$ is not equal to any colour in $L(y)$, then $\sigma(e)(l)$ is an arbitrary colour in $I_j$, provided that the resulting mapping $\sigma(e)$ is a permutation of  colours.
Thus for each $j$,  $\sigma(e)(I_j) =I_j$ and   $\sigma(e) \in S_{\lambda}$. Hence $(G, \sigma)$ has a $k$-colouring $f$. For each vertex $x$ of $G$, let $\phi(x)$ be the $f(x)$th colour in $L(x)$. Then it is easy to verify that $\phi$ is an $L$-colouring of $G$. 	
\end{proof}

 Note that if $\lambda = \{k\}$, then $S_{\lambda}$-$k$-colourable is the same as DP-$k$-colourable.
It is known that there are $k$-choosable graphs that are not DP-$k$-colourable. So the converse of Theorem \ref{thm2} is not true.

Similar to Lemma \ref{lem1}, we have the following lemma for $S_{\lambda}$-$k$-colourable graphs.

\begin{lemma}
	\label{lem2}
	Assume for $i=1,2,\ldots, q$, $\lambda_i$ is a partition of $k_i$, and
	$G_i$ is $S_{\lambda_i}$-$k_i$-colourable. 
	Let $\lambda  = \bigcup_{i=1}^q \lambda_i$ be the union of the multisets $\lambda_i$ (the multiplicity of an integer $s$ in $\lambda$ is the sum of its multiplicities in $\lambda_i$). Then $\vee_{i=1}^qG_i$ is $S_{\lambda}$-$k$-colourable.  
\end{lemma}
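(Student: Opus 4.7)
The plan is to mimic the proof of Lemma \ref{lem1}, but now working with the structure of $S_\lambda$-signatures rather than with $\lambda$-assignments directly. The key observation is that permutations in $S_\lambda$ preserve every part of the partition of $[k]$ defined by $\lambda$, so in particular they preserve the coarser partition grouping the parts according to which $\lambda_i$ they come from.

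First I would fix explicit notation. Write $k=\sum_{i=1}^q k_i$ and let $K_1,K_2,\dots,K_q$ be the partition of $[k]$ into consecutive blocks with $|K_i|=k_i$. Order the parts of $\lambda=\bigcup_{i=1}^q\lambda_i$ so that all parts coming from $\lambda_i$ appear inside $K_i$; then the partition of $[k]$ defining $S_\lambda$ refines $K_1\cup\cdots\cup K_q$, and the restriction of this refined partition to $K_i$, after the obvious order-preserving identification of $K_i$ with $[k_i]$, is exactly the partition of $[k_i]$ defining $S_{\lambda_i}$. Consequently every $\pi\in S_\lambda$ satisfies $\pi(K_i)=K_i$ for each $i$, and the restriction $\pi\!\restriction_{K_i}$ (after identifying $K_i$ with $[k_i]$) lies in $S_{\lambda_i}$.

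Next, let $G=\vee_{i=1}^qG_i$ and let $\sigma$ be an arbitrary $S_\lambda$-signature of $G$. For each $i$, let $\sigma_i$ be the signature of $G_i$ obtained by restricting $\sigma$ to the arcs of $G_i$ and then restricting each permutation to $K_i$ (identified with $[k_i]$); by the previous paragraph, $\sigma_i$ is an $S_{\lambda_i}$-signature of $G_i$. By hypothesis, there is a proper $k_i$-colouring $f_i\colon V(G_i)\to[k_i]$ of $(G_i,\sigma_i)$; via the identification, view $f_i$ as a map $V(G_i)\to K_i$. Define $f\colon V(G)\to[k]$ by $f(x)=f_i(x)$ whenever $x\in V(G_i)$.

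It remains to check that $f$ is a proper $k$-colouring of $(G,\sigma)$. For an arc $e=(x,y)$ with both endpoints in $G_i$, the condition $\sigma(e)(f(x))\neq f(y)$ is just the condition that $f_i$ properly colours $(G_i,\sigma_i)$, which holds by construction. For an arc $e=(x,y)$ with $x\in V(G_i)$ and $y\in V(G_j)$, $i\neq j$, we have $f(x)=f_i(x)\in K_i$, so $\sigma(e)(f(x))\in K_i$ by the preservation property, while $f(y)=f_j(y)\in K_j$; since $K_i\cap K_j=\emptyset$, we get $\sigma(e)(f(x))\neq f(y)$ automatically. Thus $f$ is a proper colouring and $G$ is $S_\lambda$-$k$-colourable.

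There is no real obstacle here; the only point that needs care is the bookkeeping in the first paragraph, namely checking that the block structure of $S_\lambda$ on $[k]$ restricts correctly to the block structure of $S_{\lambda_i}$ on each $K_i$ after the order-preserving identification. Once this is in place, the disjointness $K_i\cap K_j=\emptyset$ handles all cross-edges in the join for free, exactly as disjointness of the sub-lists handled them in Lemma \ref{lem1}.
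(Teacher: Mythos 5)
Your proof is correct and is essentially the same as the paper's: both restrict the $S_{\lambda}$-signature to each $G_i$ (the paper's blocks $I_j$ and shift $\sigma_j(t)=\sigma(t+k'_j)-k'_j$ are your $K_i$ and the order-preserving identification), colour each $(G_i,\sigma_i)$ separately, and combine. The only difference is that you spell out the cross-edge verification that the paper leaves as ``easy to verify.''
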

\begin{proof}
	Let $\sigma$ be a $S_{\lambda}$-signature of $G$. 
	For $j=1,2,\ldots, q$, let $k'_j = k_1+k_2+\ldots + k_{j-1}$ and 
	$I_j = \{k'_j+1,k'_j+2, \ldots, k'_j+k_j\}$. By definition, 
	$\sigma(I_j)=I_j$. Let $\sigma_j$ be the permutation of $[k_j]$
	defined as $\sigma_j(t) = \sigma(t+k'_j) - k'_j$. It follows from the definition that $\sigma_j 
	\in S_{\lambda_j}$. Hence 
	  $(G_j,\sigma_j)$ is $k_j$-colourable. Let $f_j$ be a $k_j$-colouring of $(G_j,\sigma_j)$. Let $f$ be the $k$-colouring of $G$ defined as
	  $
	  f(v) = f_j(v)+k'_j
	  $
	  for each vertex $v$ of $G_j$. 
	  Then it is easy to verify that $f$ is a $k$-colouring of $(G, \sigma)$.  
\end{proof}

\begin{corollary}
	\label{main2}
	Assume $\lambda$ is a partition of $k$ and $\lambda'$ is a partition of $k'$.
	If $\lambda \le \lambda'$, then every $S_{\lambda}$-$k$-colourable graph is $S_{\lambda'}$-$k'$-colourable, and conversely, if every $S_{\lambda}$-$k$-colourable graph is $S_{\lambda'}$-$k'$-choosable, then $\lambda \le \lambda'$. 	
\end{corollary}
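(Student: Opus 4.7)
My plan is to mirror the structure of Theorem~\ref{main1}, adapting each direction from list assignments to $S$-signatures, and to use Lemma~\ref{lem2} and Theorem~\ref{thm2} to bridge back to $\lambda$-choosability when necessary.

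For the forward direction, assume $\lambda\le\lambda'$, so by Definition~\ref{def-order} there is a partition $\lambda''$ of $k'$ with $\lambda''$ obtained from $\lambda$ by increasing parts and $\lambda'$ a refinement of $\lambda''$. I would treat the two reductions separately. The refinement step is automatic on the common ground set $[k']$: any permutation preserving the finer partition of $\lambda'$ also preserves the coarser partition of $\lambda''$, so $S_{\lambda'}\subseteq S_{\lambda''}$ and every $S_{\lambda'}$-signature is already an $S_{\lambda''}$-signature, giving the transfer of colourability. For the part-increase step, I would induct on the total increase to reduce to the case where a single part $k_j$ is raised to $k_j+1$; call the new partition $\lambda^+$, fix $I_j\subset I^+_j$ with $|I_j|=k_j$, and let $a$ be the unique element of $I^+_j\setminus I_j$. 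Given an $S_{\lambda^+}$-signature $\sigma^+$ of $G$, I would build an $S_\lambda$-signature $\sigma$ on $[k]$ by ``editing out'' $a$: for each arc $e$ put $\sigma(e)|_{I_i}=\sigma^+(e)|_{I_i}$ for $i\ne j$, and on $I_j$, if $\sigma^+(e)(a)=a$ use the restriction of $\sigma^+(e)$, otherwise, with $b=\sigma^+(e)(a)$ and $c=(\sigma^+(e))^{-1}(a)$ (both in $I_j$), set $\sigma(e)(c)=b$ and $\sigma(e)(x)=\sigma^+(e)(x)$ for $x\in I_j\setminus\{c\}$. A short case check shows $\sigma(e)\in S_\lambda$ and $\sigma(e^{-1})=\sigma(e)^{-1}$. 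Since $G$ is $S_\lambda$-$k$-colourable, $(G,\sigma)$ has a $k$-colouring $f$; regarded as a map into $[k+1]$, $f$ avoids $a$, and the only situation in which $\sigma$ and $\sigma^+$ act differently on the colour $f(u)$ used by the colouring is $f(u)=c$, in which case $\sigma^+(e)(c)=a\ne f(v)$ automatically. Hence $f$ is also a $(k+1)$-colouring of $(G,\sigma^+)$.

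For the converse, I would reuse the graph built in the proof of Theorem~\ref{main1}: let $G_i$ be the disjoint union of $n$ copies of $K_{k_i}$ (with $n=|{\cal S}|$ as there) and set $G=\vee_{i=1}^q G_i$. A greedy argument shows $K_{k_i}$ is DP-$k_i$-colourable, equivalently $S_{\{k_i\}}$-$k_i$-colourable, and this property is preserved by disjoint unions, so Lemma~\ref{lem2} gives that $G$ is $S_\lambda$-$k$-colourable. By the hypothesis, $G$ is then $S_{\lambda'}$-$k'$-colourable, and by Theorem~\ref{thm2}, $G$ is $\lambda'$-choosable. At this point the entire second half of the proof of Theorem~\ref{main1} applies verbatim: the $\lambda'$-assignment built from the $k'_j$-subsets of the $(2k'_j-1)$-element colour pools $C'_j$, together with Claims~\ref{cl1} and~\ref{cl2}, yield $\lambda\le\lambda'$.

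The main obstacle is the part-increase step in the forward direction. Unlike in $\lambda$-choosability, where one simply forgets excess colours in each list, here $\sigma^+$ lives on a strictly larger alphabet than $\sigma$, so a local ``transposition surgery'' on each arc is unavoidable; the delicate point is that this surgery must be compatible with the involution $e\mapsto e^{-1}$ defining a signature, i.e.\ one must verify $\sigma(e^{-1})=\sigma(e)^{-1}$ in both the fixed-point and non-fixed-point cases. Once this consistency is established, everything else is a routine translation of Theorem~\ref{main1} into the signature language via Theorem~\ref{thm2} and Lemma~\ref{lem2}.
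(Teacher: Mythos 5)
Your proposal is correct and follows essentially the same route as the paper: the refinement step via $S_{\lambda'}\subseteq S_{\lambda''}$, the part-increase step by inducting down to a single increment and performing exactly the same ``edit out the new symbol'' surgery on each arc (the paper's $\sigma(i)=j$ when $\sigma'(i)=k+1$, $\sigma'(k+1)=j$), and the converse via the join of disjoint copies of $K_{k_i}$, Lemma~\ref{lem2}, Theorem~\ref{thm2}, and the second half of the proof of Theorem~\ref{main1}. Your explicit verification that the surgery respects $\sigma(e^{-1})=\sigma(e)^{-1}$ and that a $k$-colouring of $(G,\sigma)$ remains valid for $(G,\sigma^+)$ fills in details the paper leaves implicit, but the argument is the same.
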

\begin{proof}
	If 	$\lambda'$ is a refinement of $\lambda$, then any $S_{\lambda'}$-signature is an $S_{\lambda}$-signature. Hence  every $S_{\lambda}$-colourable graph is $S_{\lambda'}$-colourable. 
	
	Assume $k' > k$ and $\lambda'$ is obtained from $\lambda$ by increasing some parts of $\lambda$. We shall show that  every $S_{\lambda}$-$k$-colourable graph is $S_{\lambda'}$-$k'$-colourable. By using induction, it suffices to consider the case that $\lambda=\{k_1,k_2,\ldots, k_q\}$ and $\lambda'=\{k_1, k_2, \ldots, k_{q-1}, k_q+1\}$.  
	For any $S_{\lambda'}$-signature $\sigma'$ of $G$, if $\sigma'(k+1)=k+1$, then let $\sigma$ be the restriction of $\sigma'$ to $[k]$; if $\sigma'(i)=k+1, \sigma'(k+1)=j$, then let $\sigma$ be the restriction of $\sigma'$ to $[k]$, except that $\sigma(i)=j$.  Then a $k$-colouring of $(G, \sigma)$ is also a $k'$-colouring of $(G, \sigma')$.  Hence  every $S_{\lambda}$-$k$-colourable graph is $S_{\lambda'}$-$k'$-colourable.

	 For the converse direction, assume every $S_{\lambda}$-$k$-colourable graph is $S_{\lambda'}$-$k'$-colourable. Assume $\lambda =\{k_1,k_2,\ldots, k_q\}$.
	Let $G_i$ for $i=1,2,\ldots, q$ be the disjoint union of $n$ copies of complete graphs $K_{k_i}$ and let $G=\vee_{i=1}^q G_i$. Then each $G_i$ is $S_{k_i}$-$k_i$-colourable. By Lemma \ref{lem2}, $G$ is $S_{\lambda}$-$k$-colourable.   So $G$ is $S_{\lambda'}$-$k'$-colourable.

	By Theorem \ref{thm2}, $G$ is $\lambda'$-choosable. As shown in the proof of  Theorem \ref{main1}, when $n$ is sufficiently large, we must have $\lambda \le \lambda'$. 
\end{proof}

  \begin{definition}
  	\label{def-goodbad}
  	Assume $S$ is an inverse closed non-empty subset of $S_4$. We say $S$ is {\em good} if   each $i \in [4]$is fixed by some permutation in $S$ and every planar graph is $S$-$4$-colourable.
  \end{definition}
  
  We say two subsets $S$ and $S'$ of $S_k$ are conjugate if there is a permutation $\pi \in S_k$ such that $S'= \{\pi \sigma\pi^{-1}: \sigma \in S\}$. 
  The four colour theorem is equivalent to say that $S=\{id\}$ is good.  
  It is proved in \cite{JWZ2018} that $\{id\}$ is the only good subset of $S_4$ containing $id$, and it is proved in \cite{JZ2019} that, up to conjugation,  every good subset of $S_4$ not containing $id$ is a subset of $\{(12),(34), (12)(34)\}$.

\section{Some open problems}
\label{sec-open}

The concept of $\lambda$-choosability is a refinement of choosability, and basically all questions interesting for choosability are interesting with respect to $\lambda$-choosability. In particular, if a class of graphs are known to be $k$-colourable and not known to be or not  $k$-choosable, it is interesting to ask if they are $\lambda$-choosable for some  partitions $\lambda$ of $k$. There are many such questions. This section lists a few such questions. 

By the classical Gr\"otzsch Theorem, every triangle free planar graph is $3$-colourable. It was shown by Voigt \cite{Voigt95} that there are triangle free planar graphs that are not $3$-choosable. On the other hand, it is easy to see that every triangle free planar graph is $4$-choosable.  So  the problem  
of maximum chromatic number and maximum choice number of triangle free planar graphs  is completely solved. With the refined scale of choosability introduced above, a natural question  arises. 

\begin{question}
	 Is every triangle free planar graph   $\{1,2\}$-choosable?
\end{question}

It is known that every planar graph without cycles of lengths $4,5,6,7$ are $3$-colourable \cite{BGR2010, BGRS2005}. However, whether such planar graphs are $3$-choosable remains an open question. It was conjectured by Montassier in \cite{Montassier} that every planar graph without cycles of lengths $4,5,6$ are $3$-choosable.
On the other hand, it remains open whether every planar graph without cycles of lengths $4,5,6$ is $3$-colourable. It was conjectured by Steinberg from 1976 that   every planar graph without cycles of lengths $4,5$ is $3$-colourable, and the conjecture was refuted in \cite{CHKLS} in 2017.
One plausible revision of Steinberg's conjecture is that every planar graph without cycles of lengths $4,5,6$ is $3$-colourable.
The following conjecture is sandwiched between Montassier's conjecture and the possible revision of Steinberg's conjecture.

\begin{question}
	\label{g10}
	Is it  true that every   planar graph without cycles of lengths $4,5,6$ is  $\{1,2\}$-choosable?
\end{question}

On the other hand, it remains an open   question  whether every   planar graph without cycles of lengths $4,5,6,7$ is  $\{1,2\}$-choosable.

Although there exists non-$\{1,1,2\}$-choosable planar graph, it is likely that planar graphs forbidding some configurations are $\{1,1,2\}$-choosable.
The non-$\{1,1,2\}$-choosable planar graphs given in \cite{KV2018}  contains $K_4$. By modifying that example, one can construct a $K_4$-free planar graph which is not $\{1,1,2\}$-choosable. Let $W_k$ be the  $k$-wheel, which is obtained from the cycle $C_k$ by adding a universal vertex. There are non-$4$-choosable planar graphs that contains no odd wheels. However the following question is open.

\begin{question}
	Is it true that every  planar graph containing no odd wheel is $\{1,1,2\}$-choosable?
	Or even $\{1,3\}$-choosable or $\{2,2\}$-choosable? 
\end{question}

We have shown that $3$-chromatic planar graphs are $\{1,3\}$-choosable. However, the following question remains open.

 \begin{question}
 	Is it true that every $3$-chromatic planar graph is $\{2,2\}$-choosable? 
 \end{question}

The List Colouring Conjecture is a very difficult conjecture. With the concept of $\lambda$-choosability, one may consider some weaker  version of this conjecture. The following conjecture is such an example.

 \begin{guess}
 	\label{conj-wlcc}
 	For any integer $s$, there is an integer $k(s)$ such that if $k \ge k(s)$, $G$ is edge  $k$-colourable, and $\lambda$ is a partition of $k$ in which each part has size at most $s$, then $G$ is edge $\lambda$-choosable. 
 \end{guess}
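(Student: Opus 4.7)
The plan is to extend the strategy of Theorem \ref{thm-line}. Fix an edge $k$-colouring $f : E(G) \to [k]$, a partition $\lambda = \{k_1, \ldots, k_q\}$ of $k$ with each $k_i \le s$, and a $\lambda$-assignment $L$ of the line graph of $G$ with colour groups $C_1, \ldots, C_q$. The task is to find a partition $J_1 \cup \ldots \cup J_q = [k]$ with $|J_i| = k_i$ such that, for each $i$, the spanning subgraph $H_i = f^{-1}(J_i)$ of $G$ is edge $k_i$-choosable. Once this is achieved, each $H_i$ can be coloured from the lists $L(e) \cap C_i$ independently, and the union of these colourings is an $L$-colouring of the line graph of $G$.

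Since $H_i$ is a union of at most $s$ edge-disjoint matchings, it has maximum degree at most $s$, and in fact $\Delta(H_i) \le k_i$. By Galvin's theorem, whenever $H_i$ is bipartite its edge choice number is at most $\Delta(H_i) \le k_i$, so it suffices to guarantee that every $H_i$ is bipartite, i.e.\ that no odd cycle of $G$ is monochromatic with respect to the block partition of $[k]$. For $s \le 2$ this is automatic, and recovers the first half of Theorem \ref{thm-line}. For general $s$, since $k$ may be chosen arbitrarily large in terms of $s$, I would turn to a random partitioning argument: assign the $k$ colour classes of $f$ at random to the blocks, subject to the prescribed sizes $(k_1, \ldots, k_q)$. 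For a fixed odd cycle $C$ of $G$ using $r \ge 3$ distinct colour classes, the probability that all $r$ classes fall into a common block is $O((s/k)^{r-1})$. A Lov\'asz Local Lemma application to the bad events ``$H_i$ contains an odd cycle'' should, for $k$ large enough in terms of $s$, yield a partition in which every $H_i$ is bipartite, at which point Galvin finishes the job.

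The main obstacle is that these bad events can be highly interdependent: a single colour class of $f$ may appear in many short odd cycles of $G$, and in a dense graph such as $K_n$ a fixed edge $k$-colouring may concentrate the short odd cycles onto a small subset of the colour classes, causing the natural dependency graph for the Local Lemma to be too dense for the asymmetric LLL conditions to hold. A plausible remedy is a two-stage approach: first apply Kempe-type switches between pairs of colour classes of $f$ to redistribute the odd cycles evenly across the $k$ matchings, and only then run the random partitioning. Carrying out such structural preprocessing uniformly over all edge $k$-colourable graphs is, in my view, where the real difficulty of Conjecture \ref{conj-wlcc} lies, and it is likely the step for which a genuinely new idea is required; alternatively, one may need to replace the bipartite-plus-Galvin route by a direct proof of edge $s$-choosability for unions of $s$ matchings drawn from a ``generic'' edge colouring, which is a restricted form of the List Colouring Conjecture and presumably no easier than the original.
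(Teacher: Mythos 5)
You are attempting to prove Conjecture \ref{conj-wlcc}, which the paper poses in Section 6 as an open problem; the paper contains no proof of it, so the only material to compare against is the partial progress in Theorem \ref{thm-line} (the cases $s\le 2$ for all graphs and $s\le 3$ for $K_n$). Your proposal is not a proof either, as you yourself say: it is an approach sketch whose central step is left unresolved, so the verdict is that there is a genuine gap.

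The reduction itself --- choose a partition $[k]=J_1\cup\ldots\cup J_q$ with $|J_i|=k_i$ so that each $H_i=f^{-1}(J_i)$ is edge $k_i$-choosable, then colour each $H_i$ from the lists $L(e)\cap C_i$ --- is sound, and it is exactly the strategy of Theorem \ref{thm-line}; your observation that $s\le 2$ is automatic because a union of two matchings is bipartite recovers the first half of that theorem. The gap is in producing such a partition for $s\ge 3$. The bipartite-plus-Galvin route via a random partition and the Local Lemma fails quantitatively already in the most symmetric case: for $K_n$ ($n$ even, $k=n-1$) with the canonical edge colouring and all parts equal to $s=3$, a fixed triangle has its three colour classes land in a common block with probability about $(s/k)^2$, but each colour class meets roughly $n^2/2$ of the $\binom{n}{3}$ triangles, so the product $p\cdot d$ in the symmetric Local Lemma is of order $s^2$, a constant independent of $k$. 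Taking $k(s)$ large therefore does not make the condition hold, and this has nothing to do with odd cycles being concentrated on few classes --- the canonical colouring of $K_n$ is already as spread out as possible --- so Kempe-switch preprocessing cannot repair it. Note also that bipartiteness of every $H_i$ is sufficient but neither necessary nor always achievable: the paper's proof for $K_n$ with parts of size $3$ instead uses the fact that three consecutive matchings of the canonical colouring form a $3$-edge-colourable cubic planar graph, which is edge $3$-choosable by a theorem quite different from Galvin's. Your fallback --- proving edge $s$-choosability directly for unions of $s$ matchings --- is, as you note, a hard special case of the List Colouring Conjecture. The statement remains open; your writeup correctly identifies, but does not close, the missing step.
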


For the order of partitions of integers, the following question remains open.

\begin{question}
Assume $\lambda_1, \lambda_2, \lambda_3$ are   incomparable partitions of integers. Is it true that there is a graph which is $\lambda_i$-choosable for $i=1,2$ but not $\lambda_3$-choosable? 	 More generally, assume $A$ and $B$ are families of partitions of integers, and  the partitions in $A \cup B$ are pairwise incomparable. Is it true that there is a graph which is $\lambda$-choosable for all $\lambda \in A$ but not $\lambda$-choosable for all $\lambda \in B$? 	
\end{question}


\end{document}